\newtheorem{theorem}{Theorem}[section]
\newtheorem{corollary}[theorem]{Corollary}
\newtheorem{proposition}[theorem]{Proposition}
\newtheorem{remark}[theorem]{Remark}
\newtheorem{thm}{Theorem}
 \def\cD{{\mathcal D}}
\def\bbN{{\mathbb N}}    \def\bbQ{{\mathbb Q}}
  \def\leq{\leqslant}  \def\geq{\geqslant}
\def\dim{\mbox{\rm dim}\,}
\def\bfV{{\mathbf V}}
\begin{document}

\title[Derivation functors]
{Derivation functors and Lusztig's induction functors}
\thanks{This work was supported by National Natural Science Foundation of China (No. 11771445)}

\author[Zhao]{Minghui Zhao}
\address{School of Science, Beijing Forestry University, Beijing 100083, P. R. China}
\email{zhaomh@bjfu.edu.cn (M.Zhao)}

\subjclass[2010]{16G20, 17B37}

\date{\today}

\keywords{}

\bibliographystyle{abbrv}

\maketitle

\begin{abstract}
Lusztig proved the compatibility of induction functors and restriction functors for Lusztig's perverse sheaves. Fang-Lan-Xiao established a categorification of Green's formula and gave a sheaf-level proof of this compatibility for all semisimple complexes. As an application, we study the relation between induction functors and derivation functors, which is a kind of special restriction functors.
\end{abstract}

\section{Introduction}

\subsection{}

Let $\mathbf{U}$ be a quantum group associated to a quiver $Q=(I,H)$ and $\mathbf{U}^-$ be its negative part. The algebra $\mathbf{U}^-$ is defined by the generators $F_i$ for $i\in I$ and quantum Serre relations.
In \cite{Ringel_Hall_algebras_and_quantum_groups}, Ringel introduced the twisted Ringel-Hall algebra $H^{\ast}_q(Q)$ and proved that the composition subalgebra of $H^{\ast}_q(Q)$ is isomorphic to a specilization of $\mathbf{U}^-$. In \cite{green1995hall}, Green introduced the comultiplication and showed that the comultiplication is  an algebra homomorphism by using Green's formula.

Fix an $I$-graded vector space $\bfV=\bigoplus_{i\in I}{\mathbf{V}}_i$ with dimension vector $\nu\in\mathbb{N}I$ and denote by $E_{\bfV}$ the variety consisting of representations of the quiver $Q$.
There is a $G_{\mathbf{V}}=\prod_{i\in I}GL({\mathbf{V}}_i)$ action on $E_{\mathbf{V}}$.
Denote by $\cD_{G_\bfV}^b(E_\bfV)$ the $G_\bfV$-equivariant bounded derived category of $\overline{\bbQ}_l$-constructible mixed complexes on $E_\bfV$
and by $\cD_{G_\bfV}^{b,ss}(E_\bfV)$ the subcategory of $\cD_{G_\bfV}^b(E_\bfV)$ consisting of semisimple complexes (see \cite{Kiehl_Weissauer_Weil_conjectures_perverse_sheaves_and_l'adic_Fourier_transform,Bernstein_Lunts_Equivariant_sheaves_and_functors}).

Inspired by the work of Ringel,
Lusztig (\cite{Lusztig_Canonical_bases_arising_from_quantized_enveloping_algebra,Lusztig_Quivers_perverse_sheaves_and_the_quantized_enveloping_algebras}) gave a categorification of $\mathbf{U}_{\nu}^-$ by using an additive subcategory $\mathcal{Q}_{\nu}$ of $\cD_{G_\bfV}^{b,ss}(E_\bfV)$.
The multiplication and comultiplication in $\mathbf{U}^-$ are induced by induction functors $\mathrm{Ind}^{\alpha+\beta}_{\alpha,\beta}$
and  restriction functors $\mathrm{Res}^{\alpha+\beta}_{\alpha,\beta}$, respectively.

Let $K_\nu$ be the Grothendieck group of the category $\cD_{G_{\mathbf{V}}}^{b,ss}(E_{\mathbf{V}})$ and $\mathbf{K}=\bigoplus_{\nu\in\bbN I}K_\nu.$
The functors $A\ast B=\mathrm{Ind}^{\alpha+\beta}_{\alpha,\beta}(A\boxtimes B)$ induce a multiplication  on $\mathbf{K}$.
The algebra $\mathbf{K}$ contains $\mathbf{U}^-$ as a subalgebra (\cite{XXZ_Ringel-Hall_algebras_beyond_their_quantum_groups}).

Fix $\alpha,\beta,\alpha',\beta'\in\mathbb{N}I$ such that $\alpha+\beta=\alpha'+\beta'$.
Let $\mathcal{N}$ be the set of $(\alpha_1,\alpha_2,\beta_1,\beta_2)\in(\mathbb{N}I)^4$ such that $\alpha_1+\alpha_2=\alpha, \beta_1+\beta_2=\beta, \alpha_1+\beta_1=\alpha', \alpha_2+\beta_2=\beta'$.
For any $A\in\mathcal{Q}_{{\alpha}}$ and $B\in\mathcal{Q}_{{\beta}}$, Lusztig (\cite{Lusztig_Quivers_perverse_sheaves_and_the_quantized_enveloping_algebras}) proved the compatibility of induction functors and restriction functors:
\begin{eqnarray}\label{formula_1}
&&\mathrm{Res}^{\alpha'+\beta'}_{\alpha',\beta'}\mathrm{Ind}^{\alpha+\beta}_{\alpha,\beta}(A\boxtimes B)\\
&\simeq&\bigoplus_{\lambda\in\mathcal{N}}(\mathrm{Ind}^{\alpha'}_{\alpha_1,\beta_1}\times\mathrm{Ind}^{\beta'}_{\alpha_2,\beta_2})(\tau_{\lambda})_{!}(\mathrm{Res}^{\alpha}_{\alpha_1,\alpha_2}(A)\times\mathrm{Res}^{\beta}_{\beta_1,\beta_2}(B)) [-(\alpha_2,\beta_1)](-\frac{(\alpha_2,\beta_1)}{2})\nonumber.
\end{eqnarray}
Formula (\ref{formula_1}) implies that the comultiplication on $\mathbf{U}^-$ is a homomorphism of algebras.

By Green's formula and the sheaf-function correspondence, Xiao-Xu-Zhao (\cite{XXZ_Ringel-Hall_algebras_beyond_their_quantum_groups}) proved that Formula (\ref{formula_1}) holds for all $A\in\cD_{G_{\bfV_\alpha}}^{b,ss}(E_{\bfV_\alpha})$ and $B\in\cD_{G_{\bfV_\beta}}^{b,ss}(E_{\bfV_\beta})$.
In \cite{Fang_Lan_Xiao_The_parity_of_Lusztig_restriction_functor_and_Green_formula}, Fang-Lan-Xiao established a categorification of Green's formula and gave a sheaf-level proof of Formula (\ref{formula_1}) for all $A\in\cD_{G_{\bfV_\alpha}}^{b,ss}(E_{\bfV_\alpha})$ and $B\in\cD_{G_{\bfV_\beta}}^{b,ss}(E_{\bfV_\beta})$.

\subsection{}
For any $i\in I$, Lusztig (\cite{Lusztig_Introduction_to_quantum_groups}) introduced derivations ${_ir}:\mathbf{U}^-\rightarrow\mathbf{U}^-$ and ${r_i}:\mathbf{U}^-\rightarrow\mathbf{U}^-$ such that
\begin{equation}\label{formula_2}
{_ir}^{m}(xy)=\sum_{t=0}^{m}v^{(\nu-ti,(m-t)i)+t(m-t)}\frac{[m]_{v}!}{[t]_{v}![m-t]_{v}!}{_ir}^{t}(x){_ir}^{m-t}(y)
\end{equation}
and
\begin{equation}\label{formula_3}
{r^m_i}(xy)=\sum_{t=0}^{m}v^{(ti,\nu'-(m-t)i)+t(m-t)}\frac{[m]_{v}!}{[t]_{v}![m-t]_{v}!}{r_i^t}(x){r_i^{m-t}}(y)
\end{equation}
for $x\in\mathbf{U}_{\nu}^-$ and $y\in\mathbf{U}_{\nu'}^-$. As a generalization, Chen-Xiao (\cite{chen_xiao_1999}) introduced derivations ${_i\delta}:H^{\ast}_q(Q)\rightarrow H^{\ast}_q(Q)$ and ${\delta_i}:H^{\ast}_q(Q)\rightarrow H^{\ast}_q(Q)$ satisfying similar relations.

Denote by ${_{mi}\mathcal{R}}=\mathrm{Res}^{\alpha}_{mi,\alpha-mi}$ and ${\mathcal{R}_{mi}}=\mathrm{Res}^{\alpha}_{\alpha-mi,mi}$ the derivation functors, which induce derivations ${_ir}$ and ${r_i}$ on $\mathbf{K}$ such that $${_ir^{m}}([A])=v^{\frac{m(m-1)}{2}}[D({_{mi}\mathcal{R}}(DA))]$$
and $${r_i^{m}}([A])=v^{\frac{m(m-1)}{2}}[D({\mathcal{R}_{mi}}(DA))]$$ for any $[A]\in\mathbf{K}$, where $D$ is the Verdier duality on $\cD_{G_{\bfV}}^{b}(E_{\bfV})$.

Fix $A\in\cD_{G_{\bfV_\alpha}}^{b,ss}(E_{\bfV_\alpha})$ and $B\in\cD_{G_{\bfV_\beta}}^{b,ss}(E_{\bfV_\beta})$.
Formula (\ref{formula_1}) implies that
\begin{equation}\label{formula_4}{_{mi}\mathcal{R}}(A\ast B)\simeq\bigoplus_{t=a}^{b}\mathbf{1}_{pt}^{f_{m,t}(v)}\boxtimes({_{ti}\mathcal{R}}(A)\ast{_{(m-t)i}\mathcal{R}}(B))[-P_t](-\frac{P_t}{2}),\end{equation}
and
\begin{equation}\label{formula_5}{\mathcal{R}_{mi}}(A\ast B)\simeq\bigoplus_{t=a}^{b}\mathbf{1}_{pt}^{f_{m,t}(v)}\boxtimes({\mathcal{R}_{ti}}(A)\ast{\mathcal{R}_{(m-t)i}}(B))[-P'_t](-\frac{P'_t}{2}),\end{equation}
when $\alpha'=mi$ and $\beta'=mi$, respectively.
Here, $\alpha=\sum_{i\in I}\alpha_ii$, $\beta=\sum_{i\in I}\beta_ii$, $a=\max\{0,m-\beta_i\}$, $b=\min\{m,\alpha_i\}$, $P_t=(\alpha-ti,(m-t)i)$, $P'_t=(ti,\beta-(m-t)i)$ and $f_{m,t}(v)=\frac{[m]_{v}!}{[t]_{v}![m-t]_{v}!}$.

Formulas (\ref{formula_4}) and (\ref{formula_5}) imply that derivations ${_ir}$ and ${r_i}$ on $\mathbf{K}$ satisfy  Formulas (\ref{formula_2}) and (\ref{formula_3}).

\subsection{}

The main result of this paper is the following theorems.

\begin{thm}\label{MT_intro_1}
For any $A\in\cD_{G_{\bfV_\alpha}}^{b}(E_{\bfV_\alpha})$ and $B\in\cD_{G_{\bfV_\beta}}^{b}(E_{\bfV_\beta})$,
we have distinguished triangles
$${_{mi}\mathcal{R}}(A\ast B)_{\lambda_t}\rightarrow{_{mi}\mathcal{R}}(A\ast B)_{\geq t}\rightarrow {_{mi}\mathcal{R}}(A\ast B)_{\geq t+1}\rightarrow,$$
in $\cD_{G_{\bfV_{\beta'}}}^{b}(E_{\bfV_{\beta'}})$ for $t=a,a+1,\ldots,b-1$, such that
\begin{enumerate}
  \item[(1)]${_{mi}\mathcal{R}}(A\ast B)_{\geq a}={_{mi}\mathcal{R}}(A\ast B)$;
  \item[(2)]${_{mi}\mathcal{R}}(A\ast B)_{\lambda_t}\simeq\mathbf{1}_{pt}^{f_{m,t}(v)}\boxtimes({_{ti}\mathcal{R}}(A)\ast {_{(m-t)i}\mathcal{R}}(B))[-P_{t}](\frac{-P_{t}}{2})$ for $t=a,a+1,\ldots,b$;
  \item[(3)]${_{mi}\mathcal{R}}(A\ast B)_{\geq b}={_{mi}\mathcal{R}}(A\ast B)_{\lambda_b}$.
\end{enumerate}
\end{thm}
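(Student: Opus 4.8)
The plan is to recover, for arbitrary complexes, the geometric filtration whose semisimple shadow is the splitting (\ref{formula_4}): the graded pieces will be exactly the summands appearing there, but in the absence of the decomposition theorem one retains only distinguished triangles. First I would specialize the Fang--Lan--Xiao correspondence computing $\mathrm{Res}^{\alpha+\beta}_{\alpha',\beta'}\mathrm{Ind}^{\alpha+\beta}_{\alpha,\beta}$ to the derivation case $\alpha'=mi$, $\beta'=\alpha+\beta-mi$. By base change this composition is realized on a single correspondence variety $Z$ parametrizing triples $(x,\mathbf{W},\mathbf{U})$ with $x\in E_{\bfV}$, with $\mathbf{W}\subseteq\bfV$ a $\beta$-dimensional subrepresentation coming from the induction, and with $\mathbf{U}$ the fixed $mi$-dimensional subspace at the vertex $i$ satisfying $x(\mathbf{U})\subseteq\mathbf{U}$ coming from the restriction; writing $\pi$ for the induced map to $E_{\bfV_{\beta'}}$, one has ${_{mi}\mathcal{R}}(A\ast B)\simeq\pi_{!}(\mathcal{F})$ with $\mathcal{F}$ the pullback of $A\boxtimes B$, up to the overall normalizing shift already built into Lusztig's functors.

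Next I would stratify $Z$ by the relative position of $\mathbf{U}$ and $\mathbf{W}$. Let $t$ be the dimension at the vertex $i$ of the image of $\mathbf{U}$ in $\bfV/\mathbf{W}$, so that $\mathbf{U}\cap\mathbf{W}$ has dimension $m-t$ at $i$; the constraints $t\leq\alpha_i$ and $m-t\leq\beta_i$ cut out precisely $a\leq t\leq b$. As $t$ is the rank of a linear map, the loci $Z_{\geq t}:=\{t'\geq t\}$ are open, with $Z_{\geq a}=Z$ and $Z_{\geq b}=Z_b$ the generic stratum, and each $Z_t=\{t'=t\}$ is closed in $Z_{\geq t}$ with open complement $Z_{\geq t+1}$. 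Setting ${_{mi}\mathcal{R}}(A\ast B)_{\geq t}:=\pi_{!}(\mathcal{F}|_{Z_{\geq t}})$ and ${_{mi}\mathcal{R}}(A\ast B)_{\lambda_t}:=\pi_{!}(\mathcal{F}|_{Z_t})$, the recollement (gluing) triangle of the stratified pair $(Z_t\subseteq Z_{\geq t},\,Z_{\geq t+1})$, pushed forward along the triangulated functor $\pi_{!}$, relates these three objects by a distinguished triangle for each $t=a,\dots,b-1$. Parts $(1)$ and $(3)$ are then immediate, since $Z_{\geq a}=Z$ and $Z_{\geq b}=Z_b$.

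The crux, and the main obstacle, is part $(2)$: the identification ${_{mi}\mathcal{R}}(A\ast B)_{\lambda_t}\simeq\mathbf{1}_{pt}^{f_{m,t}(v)}\boxtimes\bigl({_{ti}\mathcal{R}}(A)\ast{_{(m-t)i}\mathcal{R}}(B)\bigr)[-P_t](-\tfrac{P_t}{2})$. On $Z_t$ the relative position is constant, so the stratum fibers over the target as a product of two independent pieces. The vertex-$i$ piece is the induction $\mathrm{Ind}^{mi}_{ti,(m-t)i}$ of the point-supported $i$-components, i.e.\ the cohomology of the Grassmannian of $t$-planes in the fixed $m$-dimensional space $\mathbf{U}_i$; its pushforward is $\mathbf{1}_{pt}^{f_{m,t}(v)}$, with $f_{m,t}(v)=\tfrac{[m]_{v}!}{[t]_{v}![m-t]_{v}!}$ the Gaussian binomial recording the Lefschetz grading. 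The complementary piece separates the remaining data of $A$ and $B$ into the restrictions ${_{ti}\mathcal{R}}(A)$ and ${_{(m-t)i}\mathcal{R}}(B)$, assembled by the outer induction $\mathrm{Ind}^{\beta'}_{\alpha-ti,\,\beta-(m-t)i}$, that is the product ${_{ti}\mathcal{R}}(A)\ast{_{(m-t)i}\mathcal{R}}(B)$. The shift $[-P_t]$ and twist $(-\tfrac{P_t}{2})$, with $P_t=(\alpha-ti,(m-t)i)$, are the relative dimension of $\pi|_{Z_t}$ and match the factor on the summand $\alpha_1=ti,\ \beta_1=(m-t)i$ in (\ref{formula_1}). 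Fixing the orientation of the triangle as stated, together with the normalization making this shift come out exactly as $[-P_t](-\tfrac{P_t}{2})$ (the choice between the two recollement triangles and between $!$- and $*$-restriction to the smooth $Z_t$), is part of this same bookkeeping and is settled precisely as in the normalization underlying the proof of (\ref{formula_1}). This is Lusztig's stratum-by-stratum computation transported to the level of complexes; the only—but essential—new feature is that, without semisimplicity, the decomposition theorem is unavailable to split the triangles, so one records the filtration rather than the splitting (\ref{formula_4}), which it reproduces the moment $A$ and $B$ are semisimple.
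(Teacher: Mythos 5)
Your strategy is essentially the one the paper uses: realize ${_{mi}\mathcal{R}}(A\ast B)$ by base change on a fibre product of the induction and restriction correspondences, stratify that fibre product by the relative position of the varying subrepresentation $\mathbf{W}$ and the fixed flag, take the open--closed recollement triangles, and identify the graded pieces by the Fang--Lan--Xiao stratum computation (your sketch of part (2) --- the Grassmannian fibre contributing $\mathbf{1}_{pt}^{f_{m,t}(v)}$ and the outer induction assembling ${_{ti}\mathcal{R}}(A)\ast{_{(m-t)i}\mathcal{R}}(B)$ --- is exactly what the paper imports from \cite{Fang_Lan_Xiao_The_parity_of_Lusztig_restriction_functor_and_Green_formula}, so that level of detail is acceptable). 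However, there is a genuine gap in your setup. The functor ${_{mi}\mathcal{R}}=\mathrm{Res}^{\alpha+\beta}_{mi,\beta'}$ is defined by fixing an $x$-stable subspace $\mathbf{W}_{\beta'}$ of dimension vector $\beta'=\alpha+\beta-mi$ and passing to the subrepresentation on it; fixing instead an $mi$-dimensional $x$-stable subspace $\mathbf{U}$ and passing to the quotient is the correspondence for $\mathcal{R}_{mi}$, i.e.\ for Theorem \ref{MT_intro_1_1}, not for this one.

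This is not merely a labelling issue, because it reverses the semicontinuity and hence the orientation of the triangles. The paper stratifies $\tilde{F}=E''_{\alpha,\beta}\times_{E_{\bfV_{\alpha+\beta}}}F_{mi,\alpha+\beta-mi}$ by $\underline{\dim}(\mathbf{W}\cap\mathbf{W}_{\beta'})=\beta-(m-t)i$; intersection dimension is upper semicontinuous, so $\tilde{F}_{\geq t}$ is \emph{closed}, $\tilde{F}_{\lambda_t}$ is \emph{open} in it with closed complement $\tilde{F}_{\geq t+1}$, and the triangle $j_!j^{\ast}L\to L\to i_{\ast}i^{\ast}L\to$ produces exactly ${_{mi}\mathcal{R}}(A\ast B)_{\lambda_t}\to{_{mi}\mathcal{R}}(A\ast B)_{\geq t}\to{_{mi}\mathcal{R}}(A\ast B)_{\geq t+1}\to$. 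With your rank function the loci $Z_{\geq t}$ are \emph{open} and $Z_t$ is closed in $Z_{\geq t}$, so the only recollement triangle compatible with the $!$-extensions and $\pi_!$ you are using reads $(\geq t+1)\to(\geq t)\to(\lambda_t)\to$, the reverse of the statement; the ``other'' recollement triangle involves $i_{\ast}i^{!}$ and $Rj_{\ast}j^{\ast}$ and does not relate the objects you have defined. So the orientation cannot be deferred to ``bookkeeping settled as in the proof of (\ref{formula_1})'': it is forced by which stratum is open, and getting it right is precisely what distinguishes the ascending $\geq$-filtration of this theorem from the descending $\leq$-filtration of its companion for $\mathcal{R}_{mi}$. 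The fix is simply to use the correct correspondence (fixed $\mathbf{W}_{\beta'}$ of dimension $\beta'$) and stratify by $\underline{\dim}(\mathbf{W}\cap\mathbf{W}_{\beta'})$ as above; everything else in your outline then goes through.
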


\begin{thm}\label{MT_intro_1_1}
For any $A\in\cD_{G_{\bfV_\alpha}}^{b}(E_{\bfV_\alpha})$ and $B\in\cD_{G_{\bfV_\beta}}^{b}(E_{\bfV_\beta})$,
we have distinguished triangles
$${\mathcal{R}_{mi}}(A\ast B)_{\lambda_t}\rightarrow{\mathcal{R}_{mi}}(A\ast B)_{\leq t}\rightarrow {\mathcal{R}_{mi}}(A\ast B)_{\leq t-1}\rightarrow,$$
in $\cD_{G_{\bfV_{\alpha'}}}^{b}(E_{\bfV_{\alpha'}})$ for $t=a+1,\ldots,b-1,b$, such that
\begin{enumerate}
  \item[(1)]${\mathcal{R}_{mi}}(A\ast B)_{\leq b}={\mathcal{R}_{mi}}(A\ast B)$;
  \item[(2)]${\mathcal{R}_{mi}}(A\ast B)_{\lambda_t}\simeq\mathbf{1}_{pt}^{f_{m,t}(v)}\boxtimes({\mathcal{R}_{ti}}(A)\ast {\mathcal{R}_{(m-t)i}}(B))[-P'_{t}](\frac{-P'_{t}}{2})$ for $t=a,a+1,\ldots,b$;
  \item[(3)]${\mathcal{R}_{mi}}(A\ast B)_{\leq a}={\mathcal{R}_{mi}}(A\ast B)_{\lambda_a}$.
\end{enumerate}
\end{thm}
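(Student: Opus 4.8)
The plan is to reconstruct the proof of Formula (\ref{formula_1}) at the level of the underlying geometry, replacing the one ingredient that forces $A$ and $B$ to be semisimple---the decomposition theorem, which splits the relevant pushforward into the direct sum of Formula (\ref{formula_5})---by the localization triangles attached to a stratification, which are available for arbitrary objects of $\cD^{b}$. Since $\mathcal{R}_{mi}=\mathrm{Res}^{\alpha+\beta}_{\alpha+\beta-mi,mi}$ and $A\ast B=\mathrm{Ind}^{\alpha+\beta}_{\alpha,\beta}(A\boxtimes B)$, the object $\mathcal{R}_{mi}(A\ast B)$ is exactly the left-hand side of Formula (\ref{formula_1}) with $\beta'=mi$ and $\alpha'=\alpha+\beta-mi$; as recorded after Formula (\ref{formula_5}), the admissible $\lambda\in\mathcal{N}$ are precisely $\lambda_t=(\alpha-ti,ti,\beta-(m-t)i,(m-t)i)$ for $t=a,\ldots,b$, with $a=\max\{0,m-\beta_i\}$ and $b=\min\{m,\alpha_i\}$.

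First I would form the fibre product of the correspondence defining $\mathrm{Ind}^{\alpha+\beta}_{\alpha,\beta}$ with the correspondence defining $\mathrm{Res}^{\alpha+\beta}_{\alpha',mi}$, and apply base change. This identifies $\mathcal{R}_{mi}(A\ast B)$ with $\pi_!(\mathcal G)$ for an explicit complex $\mathcal G$ on a variety $Z$ parametrizing a representation $x$ of dimension $\alpha+\beta$ together with an $x$-stable subspace $U$ of the type appearing in induction, placed in relative position with the fixed $x$-stable subspace of the type appearing in restriction. Because the restriction factor has dimension $mi$ concentrated at the single vertex $i$, this relative position is recorded by the single integer $t$ equal to the multiplicity at $i$ of the intersection of $U$ with the fixed restriction subspace, and the resulting locally closed strata $Z_a,\ldots,Z_b$ of $Z$ are \emph{totally} ordered by closure, with $a$ and $b$ the extreme admissible values of $t$.

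Next I would set $\mathcal{R}_{mi}(A\ast B)_{\leq t}:=\pi_!\bigl(\mathcal G|_{Z_{\leq t}}\bigr)$ and $\mathcal{R}_{mi}(A\ast B)_{\lambda_t}:=\pi_!\bigl(\mathcal G|_{Z_t}\bigr)$, where $Z_{\leq t}=\bigcup_{s\leq t}Z_s$. With the closure order arranged so that each $Z_{\leq t}$ is closed in $Z$ and $Z_t$ is open in it, the decomposition $Z_{\leq t}=Z_{\leq t-1}\sqcup Z_t$ together with the triangle $j_!j^\ast\to\mathrm{id}\to i_\ast i^\ast\to$ (for the open inclusion $j\colon Z_t\hookrightarrow Z_{\leq t}$ and the closed complement $i\colon Z_{\leq t-1}\hookrightarrow Z_{\leq t}$), applied to $\mathcal G|_{Z_{\leq t}}$ and pushed forward by $\pi_!$, yields exactly the asserted distinguished triangle
$$\mathcal{R}_{mi}(A\ast B)_{\lambda_t}\to\mathcal{R}_{mi}(A\ast B)_{\leq t}\to\mathcal{R}_{mi}(A\ast B)_{\leq t-1}\to$$
Properties (1) and (3) then follow from $Z_{\leq b}=Z$ and $Z_{\leq a}=Z_a$. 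For property (2) one identifies $\pi_!(\mathcal G|_{Z_t})$ with the claimed expression; this is the stratum-wise computation already present in the proof of Formula (\ref{formula_1}): over $Z_t$ the induced subrepresentation and its quotient recover $\mathcal{R}_{ti}(A)$ and $\mathcal{R}_{(m-t)i}(B)$, whose residual induction produces $\mathcal{R}_{ti}(A)\ast\mathcal{R}_{(m-t)i}(B)$; the induction over the two $i$-concentrated pieces contributes $\mathbf{1}_{pt}^{f_{m,t}(v)}$, the (shifted) cohomology of the Grassmannian of $t$-planes in an $m$-space, whose Poincar\'e polynomial is the Gaussian binomial $f_{m,t}(v)=\frac{[m]_{v}!}{[t]_{v}![m-t]_{v}!}$; and the relative dimension of $\pi$ over $Z_t$ accounts for the shift and Tate twist $[-P'_t](-\tfrac{P'_t}{2})$ with $P'_t=(ti,\beta-(m-t)i)$.

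The step I expect to be the main obstacle is pinning down the closure order of the strata and checking that the partial unions $Z_{\leq t}$ are \emph{closed} (not open) in $Z$: this is what fixes both the direction of the triangles and the indexing by ``$\leq t$'', and it is exactly the point at which the present statement diverges from Theorem~\ref{MT_intro_1}, where the reversed closure order makes the unions $Z_{\geq t}$ closed and produces the opposite (``$\geq t$'') filtration. Alongside this one must verify that the shift and twist $[-P'_t](-\tfrac{P'_t}{2})$ are computed uniformly across the strata, so that the successive triangles are mutually compatible; once the stratification and its closure order are in place, these checks are routine and parallel to those underlying Formula (\ref{formula_1}).
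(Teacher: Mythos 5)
Your proposal follows essentially the same route as the paper: the paper proves the companion result (Theorem \ref{MT_intro_1}) by forming the fibre product $\tilde F$ of the induction and restriction correspondences, applying proper base change, stratifying $\tilde F$ by the single relative-position integer $t$, producing the distinguished triangles from the standard open/closed recollement $j_!j^\ast\rightarrow\mathrm{id}\rightarrow i_\ast i^\ast\rightarrow$, and quoting the Fang--Lan--Xiao computation for the identification of each stratum's contribution, with the present theorem then stated as ``similar.'' You correctly supply the one genuinely new point in the ``similar'' case --- that for $\beta'=mi$ the upper semicontinuity of $\dim(T\cap W)$ makes the unions $Z_{\leq t}$ closed (rather than $Z_{\geq t}$ as in Theorem \ref{MT_intro_1}), which is exactly what reverses the direction of the filtration and yields the triangles as stated.
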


The proofs of Theorems \ref{MT_intro_1} and \ref{MT_intro_1_1} are based on the proof of Formula (\ref{formula_1}) of Fang-Lan-Xiao in \cite{Fang_Lan_Xiao_The_parity_of_Lusztig_restriction_functor_and_Green_formula}.

In this paper, we also show that derivation functors satisfy quantum Serre relations.
\begin{thm}\label{MT_intro_2}
For any $i\neq j\in I$, we have
$$\bigoplus_{m+n=1-(i,j)\atop\textrm{ $m$ is odd}}{_{mi}\mathcal{R}}\cdot{_{j}\mathcal{R}}\cdot{_{ni}\mathcal{R}}\simeq\bigoplus_{m+n=1-(i,j)\atop\textrm{ $m$ is even}}{_{mi}\mathcal{R}}\cdot{_{j}\mathcal{R}}\cdot{_{ni}\mathcal{R}}$$ and
$$\bigoplus_{m+n=1-(i,j)\atop\textrm{ $m$ is odd}}{\mathcal{R}_{mi}}\cdot{\mathcal{R}_{j}}\cdot{\mathcal{R}_{ni}}\simeq\bigoplus_{m+n=1-(i,j)\atop\textrm{ $m$ is even}}{\mathcal{R}_{mi}}\cdot{\mathcal{R}_{j}}\cdot{\mathcal{R}_{ni}}.$$
\end{thm}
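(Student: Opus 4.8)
The plan is to represent each composite of derivation functors by an integral-transform kernel, to reduce the asserted isomorphism of functors to an isomorphism of these kernels, and then to deduce the latter from its Grothendieck-group shadow together with semisimplicity and the parity control of \cite{Fang_Lan_Xiao_The_parity_of_Lusztig_restriction_functor_and_Green_formula}. Fixing the ambient dimension vector $\alpha$ and arguing componentwise, I first observe that, by transitivity of restriction, each composite ${_{mi}\mathcal{R}}\cdot{_{j}\mathcal{R}}\cdot{_{ni}\mathcal{R}}$ is a single restriction functor along the flag type that successively removes $ni$, then $j$, then $mi$; concretely it takes the form $A\mapsto(\mathrm{pr}_1)_!\bigl(\mathcal{K}_m\otimes\mathrm{pr}_2^{\ast}A\bigr)$ for a kernel complex $\mathcal{K}_m$ on $E_{\bfV_{\alpha'}}\times E_{\bfV_\alpha}$, where $\alpha'=\alpha-(1-(i,j))i-j$ and $\mathcal{K}_m$ is the pushforward of a shifted constant sheaf along the corresponding flag correspondence. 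Since $m+n=1-(i,j)$ is fixed, all the $\mathcal{K}_m$ live on this one product; because the passage from a kernel to its integral transform is additive and, for the correspondences at hand, reflects isomorphisms, the first assertion becomes the kernel isomorphism $\bigoplus_{m\ \mathrm{odd}}\mathcal{K}_m\simeq\bigoplus_{m\ \mathrm{even}}\mathcal{K}_m$.

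Two geometric inputs then let me upgrade a numerical identity to this isomorphism. First, the flag correspondence is smooth and its projection to $E_{\bfV_\alpha}$ is proper (a partial flag variety is projective over the base), so by the decomposition theorem each $\mathcal{K}_m$ is a semisimple complex; hence both sides are semisimple, and an isomorphism between them may be tested on classes in the graded Grothendieck group, provided no simple perverse summand cancels. Second, the no-cancellation is guaranteed by the parity property of the Lusztig restriction functor established in \cite{Fang_Lan_Xiao_The_parity_of_Lusztig_restriction_functor_and_Green_formula}: since the removed dimension vector $(1-(i,j))i+j$ and the ambient $\alpha$ are the same for every $m$, all simple summands occurring in $\bigoplus_m\mathcal{K}_m$ sit in shifts of a single fixed parity, so equality of classes forces equality of graded multiplicities.

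It remains to verify the numerical identity $[\bigoplus_{m\ \mathrm{odd}}\mathcal{K}_m]=[\bigoplus_{m\ \mathrm{even}}\mathcal{K}_m]$ in the Grothendieck group, which is exactly the divided-power quantum Serre relation
$$\sum_{m+n=1-(i,j)}(-1)^{m}\,{_ir}^{(m)}\,{_jr}\,{_ir}^{(n)}=0$$
for the derivations induced on $\mathbf{K}$ by ${_{mi}\mathcal{R}}$, ${_{j}\mathcal{R}}$ through the normalization ${_ir^{m}}([A])=v^{\frac{m(m-1)}{2}}[D({_{mi}\mathcal{R}}(DA))]$. This is the Grothendieck-group shadow of Lusztig's Serre relation for the generators $F_i$ of $\mathbf{U}^-$ (\cite{Lusztig_Introduction_to_quantum_groups}), transported to the derivations through the adjointness of ${_ir}$ to left multiplication by $F_i$ under Lusztig's bilinear form; equivalently it is obtained by passing Lusztig's categorified Serre relation for the induction functors (\cite{Lusztig_Quivers_perverse_sheaves_and_the_quantized_enveloping_algebras}) to the Grothendieck group. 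Combined with the two inputs above, this yields the kernel isomorphism and hence the first isomorphism of functors; the second identity follows by the mirror-symmetric argument, applying the left--right involution that interchanges ${_{mi}\mathcal{R}}$ with ${\mathcal{R}_{mi}}$.

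The main obstacle is the lifting step: proving that the even and odd kernels, which \emph{a priori} differ by the various shifts $[-P_t]$ appearing in Theorems \ref{MT_intro_1} and \ref{MT_intro_1_1}, carry simple summands of a uniform parity, so that the Grothendieck-group equality cannot conceal a cancellation; this is precisely where the parity analysis of \cite{Fang_Lan_Xiao_The_parity_of_Lusztig_restriction_functor_and_Green_formula} is indispensable. A more conceptual but shift-sensitive alternative would be to take right adjoints directly in the induction Serre relation $\bigoplus_{m\ \mathrm{even}}\mathrm{Ind}_{mi}\,\mathrm{Ind}_{j}\,\mathrm{Ind}_{ni}\simeq\bigoplus_{m\ \mathrm{odd}}\mathrm{Ind}_{mi}\,\mathrm{Ind}_{j}\,\mathrm{Ind}_{ni}$: the right adjoint reverses the order of composition, is given by the restriction functors up to shift and twist, and reindexing $m\leftrightarrow n$ exchanges or preserves the two parity classes according to the parity of $1-(i,j)$; the delicate point in that route is to check that the accumulated adjunction shifts are uniform across all summands and therefore cancel.
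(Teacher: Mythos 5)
Your high-level strategy --- reduce the claim to a numerical identity coming from the quantum Serre relation and then lift back to an isomorphism of semisimple complexes --- is in spirit the paper's strategy, and the ``alternative'' you sketch at the end (take adjoints in Lusztig's induction Serre relation) is essentially what the paper actually does: for each semisimple $A$ and each simple perverse sheaf $B$, the adjunction $\{L_{mi}\ast A,B\}=\prod_{k=1}^{m}\frac{1}{1-v^{2k}}\{A,{_{mi}\mathcal{R}}(B)\}$ of Corollary \ref{BF_cor_1} converts both sides into pairings of $A$ against $\bigoplus L_{ni}\ast L_{j}\ast L_{mi}\ast B$, Proposition \ref{BF_prop_2} (Lusztig's categorified Serre relation for the constant sheaves $L_{mi}$) identifies the odd and even sums, and Corollary \ref{BF_cor_2} (almost-orthogonality of simple perverse sheaves) upgrades equality of pairings to an isomorphism, objectwise. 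You flag this route as delicate and do not carry it out; the version you do develop has concrete gaps.

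First, your semisimplicity argument fails: in the restriction correspondence the flag is \emph{fixed}, so the relevant maps are a closed embedding $\iota$ and an affine fibration $\kappa$, neither of which is proper, and the decomposition theorem does not apply; semisimplicity of $\mathrm{Res}$ (hence of the composites ${_{mi}\mathcal{R}}\cdot{_{j}\mathcal{R}}\cdot{_{ni}\mathcal{R}}$) on semisimple inputs is precisely the nontrivial theorem of \cite{Fang_Lan_Xiao_The_parity_of_Lusztig_restriction_functor_and_Green_formula} and must be quoted, not rederived from properness. Second, your reduction to a kernel isomorphism $\bigoplus_{m\ \mathrm{odd}}\mathcal{K}_m\simeq\bigoplus_{m\ \mathrm{even}}\mathcal{K}_m$ is never closed: you propose to verify it by checking that the induced operators on $\mathbf{K}$ agree, but equality of integral transforms on the Grothendieck group does not determine the class of the kernel, so this step needs a separate injectivity argument you do not supply (the paper sidesteps this entirely by testing values of the functors against simple perverse sheaves via the pairing). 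Third, the numerical identity itself is only asserted (``transported through adjointness''); making it precise is exactly the bookkeeping of the normalization factors $\prod_{k=1}^{m}\frac{1}{1-v^{2k}}$ and of the fact that $L_{mi}$ categorifies the divided power, which is where the content of Corollaries \ref{BF_cor_1} and \ref{BF_cor_2} lies. Finally, the parity discussion is a red herring: $K_\nu$ is the split Grothendieck group of the semisimple subcategory, hence free over $\mathbb{Z}[v,v^{-1}]$ on the simple perverse sheaves with nonnegative multiplicities, so no cancellation between shifts can occur and no parity input is needed once semisimplicity and the $K$-group identity are in hand.
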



In Section 2, we shall recall the definitions of restriction functors and induction functors. In Section 3, we shall recall the definition of derivation functors. The main results and their proofs are given in Section 4.

\section{Restriction functors and induction functors}

\subsection{}

Let $Q=(I,H,s,t)$ be an acyclic quiver, where $I$ is the set of vertices, $H$ is the set of arrows, and $s,t:H\rightarrow I$ are two maps such that $s(h)$ (resp. $t(h)$) is the source (resp. target) of $h\in H$. Denote by $\mathbb{F}_q$ a
finite field with $q$ elements and let $\mathbb{K}=\overline{\mathbb{F}}_q$. For any $I$-graded vector space $\mathbf{V}=\bigoplus_{i\in I}\mathbf{V}_i$ over $\mathbb{K}$ with dimension vector $\nu=\sum_{i\in I}(\dim_{\mathbb{K}}\mathbf{V}_i)i\in\mathbb{N}I$,
let $$E_{\mathbf{V}}=\bigoplus_{\rho\in H}\mathrm{Hom}_\mathbb{K}(\bfV_{s(\rho)},\bfV_{t(\rho)})$$
and $$G_{\mathbf{V}}=\prod_{i\in I}GL_\mathbb{K}({\mathbf{V}}_i).$$
The group $G_{\mathbf{V}}$ acts on $E_{\mathbf{V}}$ by $g.x=(g_{t(h)}x_{h}g^{-1}_{s(h)})_{h\in H}$ for any $g=(g_i)_{i\in I}$ and $x=(x_h)_{h\in H}$.

By $\cD_{G_\bfV}^b(E_\bfV)$, we denote the $G_\bfV$-equivariant bounded derived category of $\overline{\bbQ}_l$-constructible mixed complexes on $E_\bfV$ and by $\cD_{G_\bfV}^{b,ss}(E_\bfV)$ the subcategory of $\cD_{G_\bfV}^b(E_\bfV)$ consisting of mixed semisimple complexes.
Let $K_\nu$ be the Grothendieck group of the category $\cD_{G_{\mathbf{V}}}^{b,ss}(E_{\mathbf{V}})$,
which is a $\mathbb{Z}[v,v^{-1}]$-module by $v^{\pm}[L]=[L[\pm1](\pm\frac{1}{2})]$.
Let $$\mathbf{K}=\bigoplus_{\nu\in\bbN I}K_\nu.$$
For $f(v)=\sum_{l=m}^{n}a_lv^l\in\mathbb{Z}_{\geq0}[v,v^{-1}]$ and $L\in\cD_{G_{\mathbf{V}}}^{b}(E_{\mathbf{V}})$, denote
$$L^{f(v)}=\bigoplus_{l=m}^{n}L^{\oplus a_l}[l](\frac{l}{2}).$$

\subsection{}

For any $\alpha,\beta\in\bbN I$, fix $I$-graded vector spaces $\bfV_{\alpha+\beta}$, $\bfV_{\alpha}$, $\bfV_{\beta}$ with dimension vectors $\alpha+\beta,\alpha,\beta$, respectively.

Consider the following diagram (Section 3 in \cite{Lusztig_Quivers_perverse_sheaves_and_the_quantized_enveloping_algebras})
\begin{equation*}
\xymatrix{E_{\bfV_{\alpha}}\times E_{\bfV_{\beta}}&E_{\alpha,\beta}'\ar[l]_-{p_{1\alpha,\beta}}\ar[r]^-{p_{2\alpha,\beta}}&E_{\alpha,\beta}''\ar[r]^-{p_{3\alpha,\beta}}&E_{\bfV_{\alpha+\beta}}}.
\end{equation*}
Here
\begin{enumerate}
  \item[(1)]$E_{\alpha,\beta}''=\{(x,\mathbf{W})\}$, where $x\in E_{\bfV_{\alpha+\beta}}$ and $\mathbf{W}$ is an $x$-stable subspace of $\bfV_{\alpha+\beta}$ with dimension vector $\beta$;
  \item[(2)]$E_{\alpha,\beta}'=\{(x,\mathbf{W},R'',R')\}$, where $(x,\mathbf{W})\in E_{\alpha,\beta}''$, $R'':\bfV_{\beta}\simeq \mathbf{W}$ and $R':\bfV_{\alpha}\simeq{\bfV_{\alpha+\beta}}/\mathbf{W}$;
  \item[(3)]$p_{1\alpha,\beta}(x,\mathbf{W},R'',R')=(x',x'')$, where $x'\in E_{\bfV_{\alpha}}$ such that $R'_{t(h)}x'_{h}=\bar{x}_{h}R'_{s(h)}$ ($\bar{x}_{h}:(\bfV_{\alpha+\beta})_{s(h)}/\mathbf{W}_{s(h)}\rightarrow(\bfV_{\alpha+\beta})_{t(h)}/\mathbf{W}_{t(h)}$ is induced by $x_h$) and $x''\in E_{\bfV_{\beta}}$ such that $R''_{t(h)}x''_{h}=x_{h}|_{\mathbf{W}_{s(h)}}R''_{s(h)}$;
  \item[(4)]$p_{2\alpha,\beta}(x,\mathbf{W},R'',R')=(x,\mathbf{W})$;
  \item[(5)]$p_{3\alpha,\beta}(x,\mathbf{W})=x$.
\end{enumerate}
Since $p_{2\alpha,\beta}$ is a $G_{\bfV_{\alpha}}\times G_{\bfV_{\beta}}$-principal bundle, $$p^\ast_{2\alpha,\beta}:\cD^b_{G_{\bfV_{\alpha+\beta}}}(E_{\alpha,\beta}'')\rightarrow\cD^b_{G_{\bfV_{\alpha+\beta}}\times G_{\bfV_{\alpha}}\times G_{\bfV_{\beta}}}(E_{\alpha,\beta}')$$ is an equivalence of categories.
The inverse of $p^\ast_{2\alpha,\beta}$ is denoted by $({p_{2\alpha,\beta}})_{\flat}$.

The induction functor is defined as
\begin{eqnarray*}
\mathrm{Ind}^{\alpha+\beta}_{\alpha,\beta}:\cD_{G_{\bfV_{\alpha}}\times G_{\bfV_{\beta}}}^b(E_{\bfV_{\alpha}}\times E_{\bfV_{\beta}})&\rightarrow&\cD_{G_{\bfV_{\alpha+\beta}}}^b(E_{\bfV_{\alpha+\beta}})\\
A&\mapsto&(p_{3\alpha,\beta})_{!}({p_{2\alpha,\beta}})_{\flat}p_{1\alpha,\beta}^{\ast}(A)[m_{\alpha,\beta}](\frac{m_{\alpha,\beta}}{2}),
\end{eqnarray*}
where $m_{\alpha,\beta}=\sum_{i\in I}\alpha_i\beta_i+\sum_{\rho\in H}\alpha_{s(\rho)}\beta_{t(\rho)}.$

The induction functor induces the following functor
\begin{eqnarray*}
\ast:\cD_{G_{\bfV_{\alpha}}}^b(E_{\bfV_{\alpha}})\times\cD_{G_{\bfV_{\beta}}}^b(E_{\bfV_{\beta}})&\rightarrow&\cD_{G_{\bfV_{\alpha+\beta}}}^b(E_{\bfV_{\alpha+\beta}})\\
(A,B)&\mapsto&A\ast B=\mathrm{Ind}^{\alpha+\beta}_{\alpha,\beta}(A\boxtimes B).
\end{eqnarray*}

For any $A\in\cD_{G_{\bfV_{\alpha}}}^{b,ss}(E_{\bfV_{\alpha}})$ and $B\in\cD_{G_{\bfV_{\beta}}}^{b,ss}(E_{\bfV_{\beta}})$,
the complex  $A\ast B$ is semisimple (Section 2.4 in \cite{Fang_Lan_Xiao_The_parity_of_Lusztig_restriction_functor_and_Green_formula}).
\begin{proposition}[Section 4 in \cite{XXZ_Ringel-Hall_algebras_beyond_their_quantum_groups}]\label{prop_associative}
Induction functors for various $\alpha,\beta\in\mathbb{N}I$ induce an associative multiplication structure on $\mathbf{K}$ by $[A][B]=[A\ast B]$.
\end{proposition}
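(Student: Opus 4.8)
The plan is to upgrade the Grothendieck-group statement to a sheaf-theoretic isomorphism: I will show that for all $A\in\cD^b_{G_{\bfV_\alpha}}(E_{\bfV_\alpha})$, $B\in\cD^b_{G_{\bfV_\beta}}(E_{\bfV_\beta})$ and $C\in\cD^b_{G_{\bfV_\gamma}}(E_{\bfV_\gamma})$ there is a natural isomorphism
$$(A\ast B)\ast C\simeq A\ast(B\ast C)$$
in $\cD^b_{G_{\bfV_{\alpha+\beta+\gamma}}}(E_{\bfV_{\alpha+\beta+\gamma}})$. First I record that the product descends to $\mathbf{K}$: since $\mathrm{Ind}^{\alpha+\beta}_{\alpha,\beta}$ is the composite $(p_{3\alpha,\beta})_!({p_{2\alpha,\beta}})_\flat p_{1\alpha,\beta}^\ast$ of a pullback, an equivalence of categories and a proper pushforward, it is additive and commutes with shifts and Tate twists; combined with the fact recalled in Section~2.2 that $A\ast B$ is semisimple whenever $A$ and $B$ are, this shows that $[A][B]:=[A\ast B]$ is a well-defined biadditive operation on $\mathbf{K}$. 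Associativity of this operation then follows from the displayed isomorphism by passing to classes.

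To prove the isomorphism I introduce a common refinement given by a three-step flag variety. Fix $\bfV=\bfV_{\alpha+\beta+\gamma}$ and let
$$E''_{\alpha,\beta,\gamma}=\{(x,\mathbf{W}_1\subseteq\mathbf{W}_2)\},$$
where $x\in E_{\bfV}$ and $0\subseteq\mathbf{W}_1\subseteq\mathbf{W}_2\subseteq\bfV$ is a flag of $x$-stable subspaces with $\dim\mathbf{W}_1=\gamma$, $\dim(\mathbf{W}_2/\mathbf{W}_1)=\beta$ and $\dim(\bfV/\mathbf{W}_2)=\alpha$; let $E'_{\alpha,\beta,\gamma}$ be its framed version carrying in addition isomorphisms of the three subquotients with $\bfV_\gamma,\bfV_\beta,\bfV_\alpha$. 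As in Section~2.2 these fit into a diagram
$$E_{\bfV_\alpha}\times E_{\bfV_\beta}\times E_{\bfV_\gamma}\xleftarrow{q_1}E'_{\alpha,\beta,\gamma}\xrightarrow{q_2}E''_{\alpha,\beta,\gamma}\xrightarrow{q_3}E_{\bfV},$$
with $q_2$ a principal bundle, and I define $\mathrm{Ind}^{\alpha+\beta+\gamma}_{\alpha,\beta,\gamma}(A\boxtimes B\boxtimes C)=(q_3)_!(q_2)_\flat q_1^\ast(A\boxtimes B\boxtimes C)[M](\frac{M}{2})$ with $M=m_{\alpha,\beta}+m_{\alpha+\beta,\gamma}$. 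The chosen dimension vectors are exactly those produced by iterating the two-step construction in either order: computing $(A\ast B)\ast C$ forces a sub $\mathbf{W}_1$ of dimension $\gamma$ and a middle step $\mathbf{W}_2$ of dimension $\gamma+\beta$, while computing $A\ast(B\ast C)$ forces a sub $\mathbf{W}_2$ of dimension $\beta+\gamma$ refined by $\mathbf{W}_1$ of dimension $\gamma$; both yield the same flag variety.

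The heart of the argument is therefore the transitivity of induction: both iterated composites are canonically isomorphic to $\mathrm{Ind}^{\alpha+\beta+\gamma}_{\alpha,\beta,\gamma}$. I would carry this out by factoring the two-step flag variety for the pair $(\alpha+\beta,\gamma)$ through $E''_{\alpha,\beta,\gamma}$: forgetting the intermediate subspace gives a smooth map whose fibers are partial flag varieties, and the squares relating the framed spaces $E',E''$ of the outer induction to those of the inner one are Cartesian. Proper base change for the proper maps $q_3$ and $p_3$ then lets me commute the pushforward of the inner induction past the pullback of the outer one, and the principal-bundle inverses match up because $q_2$ factors compatibly through the bundles $p_2$ of the two stages, so that $(q_2)_\flat\simeq({p_2})_\flat\circ(\text{inner }p_2)_\flat$. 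Finally the degree and Tate twists agree because the defining shifts satisfy the cocycle identity $m_{\alpha,\beta}+m_{\alpha+\beta,\gamma}=m_{\beta,\gamma}+m_{\alpha,\beta+\gamma}$, which one checks directly from $m_{\alpha,\beta}=\sum_{i}\alpha_i\beta_i+\sum_\rho\alpha_{s(\rho)}\beta_{t(\rho)}$.

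The main obstacle I anticipate is the careful bookkeeping of the equivariance groups and of the principal-bundle inverse functors $(\cdot)_\flat$: one must verify that the several framed flag varieties are genuinely related by Cartesian squares, equivariant for the correct products of general linear groups, so that the base-change commutations and the factorization of $(q_2)_\flat$ above are legitimate. Once these compatibilities are in place, the two iterated composites are visibly the same functor, the isomorphism $(A\ast B)\ast C\simeq A\ast(B\ast C)$ follows, and associativity of $[A][B]=[A\ast B]$ on $\mathbf{K}$ is immediate.
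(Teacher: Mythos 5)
The paper does not prove this proposition itself; it is quoted from Section~4 of the cited reference, where (as in Lusztig's original treatment) the proof is exactly the transitivity-of-induction argument via the two-step flag variety $\{(x,\mathbf{W}_1\subseteq\mathbf{W}_2)\}$, proper base change, and the cocycle identity $m_{\alpha,\beta}+m_{\alpha+\beta,\gamma}=m_{\beta,\gamma}+m_{\alpha,\beta+\gamma}$. Your proposal reproduces that standard argument correctly, including the right dimension vectors for the flag (sub of dimension $\gamma$, middle subquotient $\beta$, quotient $\alpha$, matching the paper's convention that $B$ lives on the subspace in $A\ast B$) and the verification that the product descends to $\mathbf{K}$, so it is essentially the same approach as the cited source.
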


\begin{remark}
In \cite{Lusztig_Canonical_bases_arising_from_quantized_enveloping_algebra,Lusztig_Quivers_perverse_sheaves_and_the_quantized_enveloping_algebras}, Lusztig  gave a categorification of  the negative part $\mathbf{U}^-$ of a quantum group by using an additive subcategory $\mathcal{Q}_{\nu}$ of $\cD_{G_\bfV}^{b,ss}(E_\bfV)$.
The multiplication on $\mathbf{U}^-$ is induced by induction functors $\mathrm{Ind}^{\alpha+\beta}_{\alpha,\beta}$ and
the algebra $\mathbf{K}$ contains  $\mathbf{U}^-$  as a subalgebra.
\end{remark}

\subsection{}

For any $\alpha,\beta\in\bbN I$, fix $I$-graded vector spaces $\bfV_{\alpha+\beta}$, $\bfV_{\alpha}$, $\bfV_{\beta}$ with dimension vectors $\alpha+\beta,\alpha,\beta$, respectively.
Fix a subspace $\mathbf{W}\subset\bfV_{\alpha+\beta}$ with dimension vector $\beta$ and
a pair of isomorphisms $R'':\bfV_{\beta}\simeq \mathbf{W}$ and $R':\bfV_{\alpha}\simeq\bfV_{\alpha+\beta}/\mathbf{W}$.

Consider the following diagram (Section 4 in \cite{Lusztig_Quivers_perverse_sheaves_and_the_quantized_enveloping_algebras})
\begin{equation*}
\xymatrix{E_{\bfV_{\alpha}}\times E_{\bfV_{\beta}}&F_{\alpha,\beta}\ar[l]_-{\kappa_{\alpha,\beta}}\ar[r]^-{\iota_{\alpha,\beta}}&E_{\bfV_{\alpha+\beta}}}.
\end{equation*}
Here
\begin{enumerate}
  \item[(1)]$F_{\alpha,\beta}=\{x\in E_{\bfV_{\alpha+\beta}}|x_{h}(\mathbf{W}_{s(h)})\subset \mathbf{W}_{t(h)}\}$;
  \item[(2)]$\kappa_{\alpha,\beta}(x)=p_{1\alpha,\beta}(x,\mathbf{W},R'',R')$;
  \item[(3)]$\iota_{\alpha,\beta}$ is the natural embedding.
\end{enumerate}

The restriction functor is defined as
\begin{eqnarray*}
\mathrm{Res}^{\alpha+\beta}_{\alpha,\beta}:\cD_{G_{\bfV_{\alpha+\beta}}}^b(E_{\bfV_{\alpha+\beta}})&\rightarrow&\cD_{G_{\bfV_{\alpha}}\times G_{\bfV_{\beta}}}^b(E_{\bfV_{\alpha}}\times E_{\bfV_{\beta}})\\
A&\mapsto&(\kappa_{\alpha,\beta})_!\iota_{\alpha,\beta}^\ast (A)[-\langle\alpha,\beta\rangle](-\frac{\langle\alpha,\beta\rangle}{2}),
\end{eqnarray*}
where $\langle\alpha,\beta\rangle=\sum_{i\in I}\alpha_i\beta_i-\sum_{\rho\in H}\alpha_{s(\rho)}\beta_{t(\rho)}.$

For any $A\in\cD_{G_{\bfV_{\alpha+\beta}}}^{b,ss}(E_{\bfV_{\alpha+\beta}})$,
the complex  $\mathrm{Res}^{\alpha+\beta}_{\alpha,\beta}(A)$ is semisimple (Section 2.5 in \cite{Fang_Lan_Xiao_The_parity_of_Lusztig_restriction_functor_and_Green_formula}).

\begin{remark}
The comultiplication on $\mathbf{U}^-$ is induced by restriction functors $\mathrm{Res}^{\alpha+\beta}_{\alpha,\beta}$.
\end{remark}

\subsection{}

Fix $\alpha,\beta,\alpha',\beta'\in\mathbb{N}I$ such that $\alpha+\beta=\alpha'+\beta'$ and ${\mathbf{V}_{\alpha}},{\mathbf{V}_{\beta}},{\mathbf{V}_{\alpha'}},{\mathbf{V}_{\beta'}}$ with dimension vectors $\alpha,\beta,\alpha',\beta'$, respectively.
Let $$\mathcal{N}=\{\lambda=(\alpha_1,\alpha_2,\beta_1,\beta_2)\,\,|\,\,\alpha_1+\alpha_2=\alpha, \beta_1+\beta_2=\beta, \alpha_1+\beta_1=\alpha', \alpha_2+\beta_2=\beta'\}.$$ For any $\lambda\in\mathcal{N}$,
fix ${\mathbf{V}_{\alpha_1}},{\mathbf{V}_{\alpha_2}},{\mathbf{V}_{\beta_1}},{\mathbf{V}_{\beta_2}}$ with dimension vectors $\alpha_1,\alpha_2,\beta_1,\beta_2$, respectively. Let
$$\tau_{\lambda}:E_{\mathbf{V}_{\alpha_1}}\times E_{\mathbf{V}_{\alpha_2}}\times E_{\mathbf{V}_{\beta_1}}\times E_{\mathbf{V}_{\beta_2}}\rightarrow E_{\mathbf{V}_{\alpha_1}}\times E_{\mathbf{V}_{\beta_1}}\times E_{\mathbf{V}_{\alpha_2}}\times E_{\mathbf{V}_{\beta_2}}$$ be the isomorphism sending $(x_1,x_2,y_1,y_2)$ to $(x_1,y_1,x_2,y_2)$.

For any $\mu,\nu\in\mathbb{N}I$, let $(\mu,\nu)=\langle\mu,\nu\rangle+\langle\nu,\mu\rangle.$

\begin{theorem}[Theorem 7 in \cite{XXZ_Ringel-Hall_algebras_beyond_their_quantum_groups}, Theorem 3.1 in \cite{Fang_Lan_Xiao_The_parity_of_Lusztig_restriction_functor_and_Green_formula}]\label{FLX_thm}
For any $A\in\cD_{G_{\bfV_\alpha}}^{b,ss}(E_{\bfV_\alpha})$ and $B\in\cD_{G_{\bfV_\beta}}^{b,ss}(E_{\bfV_\beta})$, we have
\begin{eqnarray*}&&\mathrm{Res}^{\alpha'+\beta'}_{\alpha',\beta'}\mathrm{Ind}^{\alpha+\beta}_{\alpha,\beta}(A\boxtimes B)\\
&\simeq&\bigoplus_{\lambda\in\mathcal{N}}(\mathrm{Ind}^{\alpha'}_{\alpha_1,\beta_1}\times\mathrm{Ind}^{\beta'}_{\alpha_2,\beta_2})(\tau_{\lambda})_{!}(\mathrm{Res}^{\alpha}_{\alpha_1,\alpha_2}(A)\times\mathrm{Res}^{\beta}_{\beta_1,\beta_2}(B)) [-(\alpha_2,\beta_1)](-\frac{(\alpha_2,\beta_1)}{2}).
\end{eqnarray*}
\end{theorem}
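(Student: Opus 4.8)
The plan is to analyze the composite functor $\mathrm{Res}^{\alpha'+\beta'}_{\alpha',\beta'}\circ\mathrm{Ind}^{\alpha+\beta}_{\alpha,\beta}$ by a single geometric pull--push through a space parametrizing a pair of subspaces, and then to decompose that space by relative position. Write $\bfV=\bfV_{\alpha+\beta}=\bfV_{\alpha'+\beta'}$, let $\mathbf{W}'\subset\bfV$ be the fixed subspace of dimension vector $\beta'$ used to build the restriction functor, and recall that the induction functor ranges over all $x$-stable subspaces $\mathbf{W}$ of dimension vector $\beta$. First I would form the fibre product $Z$ of $p_{3\alpha,\beta}\colon E_{\alpha,\beta}''\to E_{\bfV}$ with the closed embedding $\iota_{\alpha',\beta'}\colon F_{\alpha',\beta'}\hookrightarrow E_{\bfV}$; a point of $Z$ is a representation $x$ preserving $\mathbf{W}'$ together with an $x$-stable subspace $\mathbf{W}$ of dimension vector $\beta$. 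Since $p_{3\alpha,\beta}$ is proper, proper base change identifies $\iota_{\alpha',\beta'}^{\ast}(p_{3\alpha,\beta})_{!}$ with the pull--push through $Z$, reducing the whole computation to a study of $Z$ together with the induction data $(p_{2\alpha,\beta})_{\flat}\,p_{1\alpha,\beta}^{\ast}(A\boxtimes B)$ transported to it.

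Next I would stratify $Z$ by the relative position of the varying subspace $\mathbf{W}$ and the fixed subspace $\mathbf{W}'$. The subspaces $\mathbf{W}\cap\mathbf{W}'$, $\mathbf{W}$ and $\mathbf{W}+\mathbf{W}'$ are all $x$-stable, and the dimension vectors of the successive subquotients of the flag $0\subset\mathbf{W}\cap\mathbf{W}'\subset\mathbf{W}\subset\mathbf{W}+\mathbf{W}'\subset\bfV$ form a four-tuple $(\alpha_1,\alpha_2,\beta_1,\beta_2)$; the relations $\beta_1+\beta_2=\beta$ (from $\mathbf{W}$), $\alpha_2+\beta_2=\beta'$ (from $\mathbf{W}'$), $\alpha_1+\beta_1=\alpha'$ (from $\bfV/\mathbf{W}'$) and $\alpha_1+\alpha_2=\alpha$ (from $\bfV/\mathbf{W}$) show that these four-tuples are exactly the elements of $\mathcal{N}$. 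Thus the strata $Z_{\lambda}$ of $Z$ are indexed by $\lambda\in\mathcal{N}$, and this stratification is the geometric incarnation of Green's formula, whose function-level content is a summation of Hall numbers over the same index set $\mathcal{N}$.

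On each locally closed stratum $Z_{\lambda}$ I would identify the transported induction data with the $\lambda$-summand $(\mathrm{Ind}^{\alpha'}_{\alpha_1,\beta_1}\times\mathrm{Ind}^{\beta'}_{\alpha_2,\beta_2})(\tau_{\lambda})_{!}(\mathrm{Res}^{\alpha}_{\alpha_1,\alpha_2}(A)\times\mathrm{Res}^{\beta}_{\beta_1,\beta_2}(B))$. The flag above has subquotients of types $\beta_2,\beta_1,\alpha_2,\alpha_1$; extracting $B$ along $\mathbf{W}$ splits it according to $\beta_2$ (inside $\mathbf{W}'$) and $\beta_1$ (mapping to $\bfV/\mathbf{W}'$), giving $\mathrm{Res}^{\beta}_{\beta_1,\beta_2}(B)$, while extracting $A$ along $\bfV/\mathbf{W}$ splits it into $\alpha_2$ (the image of $\mathbf{W}'$) and $\alpha_1$, giving $\mathrm{Res}^{\alpha}_{\alpha_1,\alpha_2}(A)$; reassembling $\bfV/\mathbf{W}'$ of type $\alpha'$ and $\mathbf{W}'$ of type $\beta'$ are the two inductions, and $\tau_{\lambda}$ effects the reordering $(\alpha_1,\alpha_2,\beta_1,\beta_2)\mapsto(\alpha_1,\beta_1,\alpha_2,\beta_2)$ of the factors. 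The intermediate maps are principal bundles and affine-space fibrations, hence equivalences up to shift and twist; collecting their fibre dimensions together with the defining shifts $m_{\bullet,\bullet}$ and $\langle\bullet,\bullet\rangle$ of the induction and restriction functors accumulates to precisely $[-(\alpha_2,\beta_1)](-\tfrac{(\alpha_2,\beta_1)}{2})$.

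Finally I would assemble the strata. Ordering $\mathcal{N}$ by weakly increasing codimension of $Z_{\lambda}$ and using the recollement attached to each open--closed decomposition yields a sequence of distinguished triangles whose graded pieces are the summands of the previous paragraph; passing to the Grothendieck group $\mathbf{K}$, these triangles reproduce Green's formula, so the two sides of the theorem already agree in $\mathbf{K}$. The genuine content---and the main obstacle---is that these triangles split, producing a direct sum rather than merely a filtered object. I expect to clear this obstacle with the parity theorem of Fang--Lan--Xiao: after the indicated shifts, $\mathrm{Res}^{\alpha'+\beta'}_{\alpha',\beta'}\mathrm{Ind}^{\alpha+\beta}_{\alpha,\beta}(A\boxtimes B)$ and each of its candidate summands are concentrated in cohomological degrees of a single parity, so the $\mathrm{Hom}$-groups controlling the connecting morphisms of the triangles vanish. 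This parity vanishing is exactly the categorified form of Green's formula, and it forces every triangle to split, giving the asserted isomorphism for arbitrary semisimple $A$ and $B$ rather than only for objects of $\mathcal{Q}_{\alpha}$ and $\mathcal{Q}_{\beta}$.
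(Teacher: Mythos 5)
The paper does not prove Theorem~\ref{FLX_thm} itself but imports it from Xiao--Xu--Zhao and Fang--Lan--Xiao, and your outline faithfully reconstructs that source argument: base change through the fibre product of $p_{3\alpha,\beta}$ with $\iota_{\alpha',\beta'}$, stratification by the relative position of the moving subspace $\mathbf{W}$ and the fixed one (indexed exactly by $\mathcal{N}$, with the correct identification of the four subquotients and of the shift $[-(\alpha_2,\beta_1)](-\frac{(\alpha_2,\beta_1)}{2})$), and splitting of the resulting recollement triangles via the Fang--Lan--Xiao parity theorem. This is also precisely the skeleton the paper follows in its own proof of the special case (Theorem~\ref{MT}), so your approach is essentially the same as the cited proof and I see no gap beyond the routine bookkeeping you explicitly defer.
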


\begin{remark}
In Section 8 of \cite{Lusztig_Quivers_perverse_sheaves_and_the_quantized_enveloping_algebras}, Lusztig proved Theorem \ref{FLX_thm} for any $A\in\mathcal{Q}_{{\alpha}}$ and $B\in\mathcal{Q}_{{\beta}}$, which implies that the comultiplication on $\mathbf{U}^-$ is a homomorphism of algebras.
\end{remark}

\section{Derivation functors}

\subsection{}

For any $\alpha\in\bbN I$, fix $I$-graded vector spaces $\bfV_{\alpha}$, $\bfV_{\alpha-mi}$ and $\bfV_{mi}$ with dimension vectors $\alpha,\alpha-mi$ and $mi$, respectively.
Fix a subspace $\mathbf{W}\subset\bfV_{\alpha}$ with dimension vector $\alpha-mi$ and
an isomorphism $R'':\bfV_{\alpha-mi}\simeq \mathbf{W}$.

Consider the following diagram
\begin{equation*}
\xymatrix{E_{\bfV_{\alpha-mi}}&F_{mi,\alpha-mi}\ar[l]_-{\kappa_{mi,\alpha-mi}}\ar[r]^-{\iota_{mi,\alpha-mi}}&E_{\bfV_{\alpha}}}.
\end{equation*}
Here $\kappa_{mi,\alpha-mi}(x)=x''$, where $x''\in E_{\bfV_{\alpha-mi}}$ such that $R''_{t(h)}x''_{h}=x_{h}|_{\mathbf{W}_{s(h)}}R''_{s(h)}$.

The derivation functor is defined as
\begin{eqnarray*}
{_{mi}\mathcal{R}}:\cD_{G_{\bfV_{\alpha}}}^b(E_{\bfV_{\alpha}})&\rightarrow&\cD_{G_{\bfV_{\alpha-mi}}}^b(E_{\bfV_{\alpha-mi}})\\
A&\mapsto&(\kappa_{mi,\alpha-mi})_!\iota_{mi,\alpha-mi}^\ast A[-\langle{mi,\alpha-mi}\rangle](-\frac{\langle{mi,\alpha-mi}\rangle}{2}).
\end{eqnarray*}


Note that $\mathrm{Res}^{\alpha}_{mi,\alpha-mi}(A)=L_{mi}\boxtimes{_{mi}\mathcal{R}}(A)$,
where $L_{mi}=\mathbf{1}_{E_{\bfV_{mi}}}$ is the constant sheaf on $E_{\bfV_{mi}}$.

For any $m\in\mathbb{Z}_{>0}$, denote $[m]_{v}=\frac{v^m-v^{-m}}{v-v^{-1}}$ and $[m]_{v}!=\prod_{l=1}^{m}[l]_{v}$. Let $[0]_{v}!=1$. Denote $f_{m,t}(v)=\frac{[m]_{v}!}{[t]_{v}![m-t]_{v}!}$.
In Theorem \ref{FLX_thm}, assume that $\alpha'=mi$ and we get the following corollary.

\begin{corollary}\label{FLX_cor}
For any $A\in\cD_{G_{\bfV_\alpha}}^{b,ss}(E_{\bfV_\alpha})$ and $B\in\cD_{G_{\bfV_\beta}}^{b,ss}(E_{\bfV_\beta})$, we have
$${_{mi}\mathcal{R}}(A\ast B)\simeq\bigoplus_{t=a}^{b}\mathbf{1}_{pt}^{f_{m,t}(v)}\boxtimes({_{ti}\mathcal{R}}(A)\ast{_{(m-t)i}\mathcal{R}}(B))[-P_t](-\frac{P_t}{2}),$$
where $\alpha=\sum_{i\in I}\alpha_ii$, $\beta=\sum_{i\in I}\beta_ii$, $a=\max\{0,m-\beta_i\}$, $b=\min\{m,\alpha_i\}$, $P_t=(\alpha-ti,(m-t)i)$.
\end{corollary}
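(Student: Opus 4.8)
The plan is to read off Corollary \ref{FLX_cor} as the special case $\alpha'=mi$, $\beta'=\alpha+\beta-mi$ of Theorem \ref{FLX_thm}, after unwinding what each ingredient of that formula becomes once $\alpha'$ is concentrated at the single vertex $i$. First I would rewrite the left-hand side: by definition $\mathrm{Ind}^{\alpha+\beta}_{\alpha,\beta}(A\boxtimes B)=A\ast B$, and $\alpha'+\beta'=\alpha+\beta$ forces $\beta'=\alpha+\beta-mi$, so the left-hand side of Theorem \ref{FLX_thm} is $\mathrm{Res}^{\alpha+\beta}_{mi,\alpha+\beta-mi}(A\ast B)$. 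Since $Q$ is acyclic there are no loops at $i$, hence $E_{\bfV_{mi}}$ is a point and $L_{mi}=\mathbf{1}_{E_{\bfV_{mi}}}=\mathbf{1}_{pt}$; the factorization $\mathrm{Res}^{\gamma}_{ni,\gamma-ni}=L_{ni}\boxtimes{_{ni}\mathcal{R}}$ recalled after the definition of the derivation functor then gives $\mathrm{Res}^{\alpha+\beta}_{mi,\alpha+\beta-mi}(A\ast B)=\mathbf{1}_{pt}\boxtimes{_{mi}\mathcal{R}}(A\ast B)$, so up to the trivial first tensor factor the left-hand side is exactly ${_{mi}\mathcal{R}}(A\ast B)$.

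Next I would parametrize the index set $\mathcal{N}$. For $\lambda=(\alpha_1,\alpha_2,\beta_1,\beta_2)\in\mathcal{N}$ the condition $\alpha_1+\beta_1=\alpha'=mi$ together with the positivity of $\alpha_1,\beta_1$ forces both to be supported at $i$, so $\alpha_1=ti$ and $\beta_1=(m-t)i$ for a unique integer $t$, whence $\alpha_2=\alpha-ti$ and $\beta_2=\beta-(m-t)i$ are determined. The constraints $0\leq\alpha_1\leq\alpha$ and $0\leq\beta_1\leq\beta$ translate into $m-\beta_i\leq t\leq\alpha_i$ together with $0\leq t\leq m$, i.e. $a\leq t\leq b$; thus $\lambda\mapsto t$ is a bijection from $\mathcal{N}$ onto $\{a,\dots,b\}$. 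Under this substitution the two restriction factors become $\mathrm{Res}^{\alpha}_{ti,\alpha-ti}(A)=\mathbf{1}_{pt}\boxtimes{_{ti}\mathcal{R}}(A)$ and $\mathrm{Res}^{\beta}_{(m-t)i,\beta-(m-t)i}(B)=\mathbf{1}_{pt}\boxtimes{_{(m-t)i}\mathcal{R}}(B)$, again by the factorization of restriction through a derivation functor. The isomorphism $\tau_\lambda$ simply regroups the four tensor factors so that the two point factors (coming from the $i$-restrictions) are collected for $\mathrm{Ind}^{\alpha'}_{\alpha_1,\beta_1}=\mathrm{Ind}^{mi}_{ti,(m-t)i}$ and the two remaining factors for $\mathrm{Ind}^{\beta'}_{\alpha_2,\beta_2}$. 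The latter induction is, by the definition of $\ast$, precisely ${_{ti}\mathcal{R}}(A)\ast{_{(m-t)i}\mathcal{R}}(B)$, and the shift $(\alpha_2,\beta_1)=(\alpha-ti,(m-t)i)$ is exactly $P_t$.

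The one genuine computation is the former induction $\mathrm{Ind}^{mi}_{ti,(m-t)i}(\mathbf{1}_{pt}\boxtimes\mathbf{1}_{pt})$, which lives on the point $E_{\bfV_{mi}}$. Here the middle space $E''_{ti,(m-t)i}$ of the induction diagram is the variety of $x$-stable subspaces of dimension vector $(m-t)i$ in $\bfV_{mi}$; as the only representation on $\bfV_{mi}$ is the zero one, every subspace is stable and this space is the Grassmannian $\mathrm{Gr}(m-t,m)$. Pulling back and pushing down the constant sheaf through the induction diagram and inserting the shift $m_{ti,(m-t)i}=t(m-t)$, I would identify this induction with the (shifted) cohomology of $\mathrm{Gr}(m-t,m)$, whose Poincar\'e polynomial is the Gaussian binomial coefficient. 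The most delicate part is the bookkeeping of the weight and Tate-twist normalization: one must check that, after the shift $[t(m-t)](\frac{t(m-t)}{2})$ and using the palindromicity of the Gaussian binomial, this cohomology is exactly $\mathbf{1}_{pt}^{f_{m,t}(v)}$ with $f_{m,t}(v)=\frac{[m]_v!}{[t]_v![m-t]_v!}$; this is the single-vertex computation already implicit in Lusztig's work, and I expect it to be the only step requiring care rather than routine substitution. Assembling these identifications over the bijection $\mathcal{N}\cong\{a,\dots,b\}$ then yields ${_{mi}\mathcal{R}}(A\ast B)\simeq\bigoplus_{t=a}^{b}\mathbf{1}_{pt}^{f_{m,t}(v)}\boxtimes({_{ti}\mathcal{R}}(A)\ast{_{(m-t)i}\mathcal{R}}(B))[-P_t](-\frac{P_t}{2})$, which is the assertion of the corollary.
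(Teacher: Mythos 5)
Your proposal is correct and follows essentially the same route as the paper: specialize Theorem \ref{FLX_thm} to $\alpha'=mi$, identify $\mathcal{N}$ with $\{a,\dots,b\}$, use the factorization $\mathrm{Res}^{\gamma}_{ni,\gamma-ni}=L_{ni}\boxtimes{_{ni}\mathcal{R}}$, and reduce the remaining induction to the single-vertex identity $L_{ti}\ast L_{(m-t)i}\simeq L_{mi}^{f_{m,t}(v)}$ (which the paper simply invokes and you justify via the Grassmannian computation). The extra detail you supply is consistent with, not divergent from, the paper's three-line argument.
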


\begin{proof}
Theorem \ref{FLX_thm} implies that
\begin{eqnarray*}
&&L_{mi}\boxtimes{_{mi}\mathcal{R}}(A\ast B)\\
&\simeq&\bigoplus_{t=a}^{b}(L_{ti}\ast L_{(m-t)i})\boxtimes({_{ti}\mathcal{R}}(A)\ast{_{(m-t)i}\mathcal{R}}(B))[-P_t](-\frac{P_t}{2})\\
&\simeq&\bigoplus_{t=a}^{b}L^{f_{m,t}(v)}_{mi}\boxtimes({_{ti}\mathcal{R}}(A)\ast{_{(m-t)i}\mathcal{R}}(B))[-P_t](-\frac{P_t}{2})\\
&\simeq&L_{mi}\boxtimes\bigoplus_{t=a}^{b}\mathbf{1}_{pt}^{f_{m,t}(v)}\boxtimes({_{ti}\mathcal{R}}(A)\ast{_{(m-t)i}\mathcal{R}}(B))[-P_t](-\frac{P_t}{2}).
\end{eqnarray*}

\end{proof}

Derivation functors for various $\alpha\in\mathbb{N}I$ induce a derivation ${_ir}$ on $\mathbf{K}$ such that $${_ir^{m}}([A])=v^{\frac{m(m-1)}{2}}[D({_{mi}\mathcal{R}}(DA))]$$ for any $[A]\in\mathbf{K}$.

\begin{remark}
For any $i\in I$, Lusztig (Section 1.2.13 in \cite{Lusztig_Introduction_to_quantum_groups}) introduced a derivation ${_ir}:\mathbf{U}^-\rightarrow\mathbf{U}^-$ such that
\begin{equation}\label{formula_derivation}
{_ir}^{m}(xy)=\sum_{t=0}^{m}v^{(\nu-ti,(m-t)i)+t(m-t)}\frac{[m]_{v}!}{[t]_{v}![m-t]_{v}!}{_ir}^{t}(x){_ir}^{m-t}(y)
\end{equation}
for $x\in\mathbf{U}_{\nu}^-$.
Note that ${_ir}:\mathbf{U}^-\rightarrow\mathbf{U}^-$ is the restriction of ${_ir}:\mathbf{K}\rightarrow\mathbf{K}$ on $\mathbf{U}^-$ and
Corollary \ref{FLX_cor} is a categorification of Formula (\ref{formula_derivation}).
\end{remark}

\subsection{}

For any $\alpha\in\bbN I$, fix $I$-graded vector spaces $\bfV_{\alpha}$, $\bfV_{\alpha-mi}$ and $\bfV_{mi}$ with dimension vectors $\alpha,\alpha-mi$ and $mi$, respectively.
Fix a subspace $\mathbf{W}\subset\bfV_{\alpha}$ with dimension vector $mi$ and
an isomorphisms $R':\bfV_{\alpha-mi}\simeq \bfV_{\alpha}/\mathbf{W}$.

Consider the following diagram
\begin{equation*}
\xymatrix{E_{\bfV_{\alpha-mi}}&F_{\alpha-mi,mi}\ar[l]_-{\kappa_{\alpha-mi,mi}}\ar[r]^-{\iota_{\alpha-mi,mi}}&E_{\bfV_{\alpha}}}.
\end{equation*}
Here $\kappa_{\alpha-mi,mi}(x)=x'$, where $x'\in E_{\bfV_{\alpha-mi}}$ such that $R'_{t(h)}x'_{h}=\bar{x}_{h}R'_{s(h)}$ ($\bar{x}_{h}:(\bfV_{\alpha})_{s(h)}/\mathbf{W}_{s(h)}\rightarrow(\bfV_{\alpha})_{t(h)}/\mathbf{W}_{t(h)}$ is induced by $x_h$).

The derivation functor is defined as
\begin{eqnarray*}
{\mathcal{R}_{mi}}:\cD_{G_{\bfV_{\alpha}}}^b(E_{\bfV_{\alpha}})&\rightarrow&\cD_{G_{\bfV_{\alpha-mi}}}^b(E_{\bfV_{\alpha-mi}})\\
A&\mapsto&(\kappa_{\alpha-mi,mi})_!\iota_{\alpha-mi,mi}^\ast A[-\langle{\alpha-mi,mi}\rangle](-\frac{\langle{\alpha-mi,mi}\rangle}{2}).
\end{eqnarray*}


Note that $\mathrm{Res}^{\alpha}_{\alpha-mi,mi}(A)={\mathcal{R}_{mi}}(A)\boxtimes L_{mi}$.
In Theorem \ref{FLX_thm}, assume that $\beta'=mi$ and we get the following corollary.

\begin{corollary}\label{FLX_cor_1}
For any $A\in\cD_{G_{\bfV_\alpha}}^{b,ss}(E_{\bfV_\alpha})$ and $B\in\cD_{G_{\bfV_\beta}}^{b,ss}(E_{\bfV_\beta})$, we have
$${\mathcal{R}_{mi}}(A\ast B)\simeq\bigoplus_{t=a}^{b}\mathbf{1}_{pt}^{f_{m,t}(v)}\boxtimes({\mathcal{R}_{ti}}(A)\ast{\mathcal{R}_{(m-t)i}}(B))[-P'_t](-\frac{P'_t}{2}),$$
where $\alpha=\sum_{i\in I}\alpha_ii$, $\beta=\sum_{i\in I}\beta_ii$, $a=\max\{0,m-\beta_i\}$, $b=\min\{m,\alpha_i\}$, $P'_t=(ti,\beta-(m-t)i)$.
\end{corollary}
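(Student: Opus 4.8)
The plan is to mirror the proof of Corollary \ref{FLX_cor}, now specializing Theorem \ref{FLX_thm} at $\beta'=mi$ (so that $\alpha'=\alpha+\beta-mi$) rather than at $\alpha'=mi$. First I would record the defining relation for ${\mathcal{R}}$, namely $\mathrm{Res}^{\gamma}_{\gamma-ni,ni}(C)={\mathcal{R}_{ni}}(C)\boxtimes L_{ni}$, valid for any dimension vector $\gamma$ and any $n$. Applied to $C=A\ast B$, $\gamma=\alpha+\beta$, $n=m$, this gives ${\mathcal{R}_{mi}}(A\ast B)\boxtimes L_{mi}=\mathrm{Res}^{\alpha+\beta}_{\alpha',mi}(A\ast B)$, which is precisely the left-hand side of Theorem \ref{FLX_thm} in this case.

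Next I would parametrize the index set $\mathcal{N}$. Since $\beta'=mi$ and $\alpha_2+\beta_2=\beta'$ with $\alpha_2,\beta_2\in\mathbb{N}I$, each component of $\alpha_2$ and $\beta_2$ away from $i$ must vanish, so $\alpha_2=ti$ and $\beta_2=(m-t)i$ for some $t$; consequently $\alpha_1=\alpha-ti$ and $\beta_1=\beta-(m-t)i$. The constraints $\alpha_2\leq\alpha$ and $\beta_2\leq\beta$ read $t\leq\alpha_i$ and $m-t\leq\beta_i$, so $t$ runs exactly from $a=\max\{0,m-\beta_i\}$ to $b=\min\{m,\alpha_i\}$, matching the stated range. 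With this parametrization the shift in Theorem \ref{FLX_thm} becomes $(\alpha_2,\beta_1)=(ti,\beta-(m-t)i)=P'_t$, producing the claimed $[-P'_t](-\frac{P'_t}{2})$.

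Then I would substitute the restriction identities into the right-hand side of Theorem \ref{FLX_thm}. For each $t$ one has $\mathrm{Res}^{\alpha}_{\alpha_1,\alpha_2}(A)=\mathrm{Res}^{\alpha}_{\alpha-ti,ti}(A)={\mathcal{R}_{ti}}(A)\boxtimes L_{ti}$ and $\mathrm{Res}^{\beta}_{\beta_1,\beta_2}(B)=\mathrm{Res}^{\beta}_{\beta-(m-t)i,(m-t)i}(B)={\mathcal{R}_{(m-t)i}}(B)\boxtimes L_{(m-t)i}$. Applying $(\tau_\lambda)_!$, which reorders the four external factors into the order $(E_{\bfV_{\alpha_1}},E_{\bfV_{\beta_1}},E_{\bfV_{\alpha_2}},E_{\bfV_{\beta_2}})$, the two derivation factors ${\mathcal{R}_{ti}}(A)$ and ${\mathcal{R}_{(m-t)i}}(B)$ land under $\mathrm{Ind}^{\alpha'}_{\alpha_1,\beta_1}$, while the two constant-sheaf factors $L_{ti}$ and $L_{(m-t)i}$ land under $\mathrm{Ind}^{\beta'}_{\alpha_2,\beta_2}=\mathrm{Ind}^{mi}_{ti,(m-t)i}$. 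Hence the $\lambda$-summand equals $({\mathcal{R}_{ti}}(A)\ast{\mathcal{R}_{(m-t)i}}(B))\boxtimes(L_{ti}\ast L_{(m-t)i})[-P'_t](-\frac{P'_t}{2})$.

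Finally I would invoke the constant-sheaf computation $L_{ti}\ast L_{(m-t)i}\simeq L_{mi}^{f_{m,t}(v)}$ (the quantum Gaussian-binomial identity already used in Corollary \ref{FLX_cor}) and rewrite $L_{mi}^{f_{m,t}(v)}=\mathbf{1}_{pt}^{f_{m,t}(v)}\boxtimes L_{mi}$, so that the common $L_{mi}$ can be pulled out of the direct sum to give $\left(\bigoplus_{t=a}^{b}\mathbf{1}_{pt}^{f_{m,t}(v)}\boxtimes({\mathcal{R}_{ti}}(A)\ast{\mathcal{R}_{(m-t)i}}(B))[-P'_t](-\frac{P'_t}{2})\right)\boxtimes L_{mi}$. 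Comparing with the left-hand side ${\mathcal{R}_{mi}}(A\ast B)\boxtimes L_{mi}$ and cancelling the $\boxtimes L_{mi}$ factor yields the corollary. I expect the only point requiring genuine care to be the bookkeeping in the third step: tracking, through $\tau_\lambda$ and the two inductions, which factor carries the constant sheaves versus the derivations, and, because ${\mathcal{R}_{mi}}$ places $L_{mi}$ on the \emph{right} of $\boxtimes$ (whereas ${_{mi}\mathcal{R}}$ placed it on the left in Corollary \ref{FLX_cor}), making sure the $L_{mi}$ factor ends up on the correct side so that it cancels cleanly.
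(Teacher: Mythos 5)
Your proposal is correct and follows exactly the route the paper intends: the paper proves the companion Corollary \ref{FLX_cor} by specializing Theorem \ref{FLX_thm} at $\alpha'=mi$ and leaves this one as the symmetric case $\beta'=mi$, which is precisely what you carry out, including the correct identification $(\alpha_2,\beta_1)=P'_t$ and the placement of $L_{mi}$ on the right of $\boxtimes$. The only step you flag as delicate (tracking the factors through $\tau_\lambda$) and the final cancellation of the common $\boxtimes L_{mi}$ factor are both handled the same way (implicitly) in the paper's proof of Corollary \ref{FLX_cor}, so there is nothing to add.
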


Derivation functors for various $\alpha\in\mathbb{N}I$ induce a derivation ${r_i}$ on $\mathbf{K}$ such that $${r_i^{m}}([A])=v^{\frac{m(m-1)}{2}}[D({\mathcal{R}_{mi}}(DA))]$$ for any $[A]\in\mathbf{K}$.

\begin{remark}
For any $i\in I$, Lusztig (Section 1.2.13 in \cite{Lusztig_Introduction_to_quantum_groups}) introduced a derivation ${r_i}:\mathbf{U}^-\rightarrow\mathbf{U}^-$ such that
\begin{equation}\label{formula_derivation_1}
{r^m_i}(xy)=\sum_{t=0}^{m}v^{(ti,\nu-(m-t)i)+t(m-t)}\frac{[m]_{v}!}{[t]_{v}![m-t]_{v}!}{r_i^t}(x){r_i^{m-t}}(y)
\end{equation}
for $y\in\mathbf{U}_{\nu}^-$.
Note that ${r_i}:\mathbf{U}^-\rightarrow\mathbf{U}^-$ is the restriction of ${r_i}:\mathbf{K}\rightarrow\mathbf{K}$ on $\mathbf{U}^-$ and
Corollary \ref{FLX_cor_1} is a categorification of Formula (\ref{formula_derivation_1}).
\end{remark}

\section{Main results}

\subsection{}

Fix $\alpha=\sum_{i\in I}\alpha_ii,\beta=\sum_{i\in I}\beta_ii,\beta'\in\mathbb{N}I$ such that $\alpha+\beta=mi+\beta'$. Fix $I$-graded vector spaces ${\mathbf{V}_{\alpha}},{\mathbf{V}_{\beta}},{\mathbf{V}_{\alpha+\beta}},{\mathbf{V}_{\beta'}}$ with dimension vectors $\alpha,\beta,\alpha+\beta,\beta'$, respectively.
Now $$\mathcal{N}=\{\lambda_t=(ti,\alpha-ti,(m-t)i,\beta-(m-t)i)|t=a,a+1,\ldots,b\},$$
where $a=\max\{0,m-\beta_i\}$ and $b=\min\{m,\alpha_i\}$.
Fix ${\mathbf{V}_{\alpha-ti}},{\mathbf{V}_{\beta-ti}}$ with dimension vectors $\alpha-ti,\beta-ti$ respectively.

For the definitions of $\mathcal{R}_{ti}$, fix $I$-graded vector subspaces $\mathbf{W}_{\alpha-ti}\subset\mathbf{V}_{\alpha},\mathbf{W}_{\beta'}\subset\mathbf{V}_{\alpha+\beta},\mathbf{W}_{\beta-ti}\subset\mathbf{V}_{\beta}$ with dimension vectors $\alpha-ti,\alpha+\beta-mi,\beta-ti$, respectively, and fix
$I$-graded linear maps $R_{\alpha-ti}'':\mathbf{V}_{\alpha-ti}\simeq \mathbf{W}_{\alpha-ti}$,
$R_{\beta'}'':\bfV_{\beta'}\simeq \mathbf{W}_{\beta'}$, and
$R_{\beta-ti}'':\bfV_{\beta-ti}\simeq \mathbf{W}_{\beta-ti}$.


\begin{theorem}\label{MT}
For any $A\in\cD_{G_{\bfV_\alpha}}^{b}(E_{\bfV_\alpha})$ and $B\in\cD_{G_{\bfV_\beta}}^{b}(E_{\bfV_\beta})$,
we have distinguished triangles
$${_{mi}\mathcal{R}}(A\ast B)_{\lambda_t}\rightarrow{_{mi}\mathcal{R}}(A\ast B)_{\geq t}\rightarrow {_{mi}\mathcal{R}}(A\ast B)_{\geq t+1}\rightarrow,$$
in $\cD_{G_{\bfV_{\beta'}}}^{b}(E_{\bfV_{\beta'}})$ for $t=a,a+1,\ldots,b-1$, such that
\begin{enumerate}
  \item[(1)]${_{mi}\mathcal{R}}(A\ast B)_{\geq a}={_{mi}\mathcal{R}}(A\ast B)$;
  \item[(2)]${_{mi}\mathcal{R}}(A\ast B)_{\lambda_t}\simeq\mathbf{1}_{pt}^{f_{m,t}(v)}\boxtimes({_{ti}\mathcal{R}}(A)\ast {_{(m-t)i}\mathcal{R}}(B))[-P_{t}](\frac{-P_{t}}{2})$ for $t=a,a+1,\ldots,b$;
  \item[(3)]${_{mi}\mathcal{R}}(A\ast B)_{\geq b}={_{mi}\mathcal{R}}(A\ast B)_{\lambda_b}$.
\end{enumerate}
\end{theorem}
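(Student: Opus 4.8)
The plan is to follow the geometric construction underlying the Fang--Lan--Xiao proof of Theorem~\ref{FLX_thm}, but to extract the filtration directly from a stratification of the relevant correspondence variety, rather than invoking the decomposition theorem to split it (the latter being unavailable for arbitrary $A$ and $B$, which need not be semisimple). In the special case $\alpha'=mi$ the object ${_{mi}\mathcal{R}}(A\ast B)$ is computed by a single diagram $E_{\bfV_{\beta'}}\xleftarrow{\pi}Z\xrightarrow{\varpi}E_{\bfV_\alpha}\times E_{\bfV_\beta}$, where $Z$ is the fibre product (obtained by base change from the correspondences defining the induction and derivation functors) parametrizing a representation $x$ together with an $x$-stable subspace $\mathbf{W}$ of dimension $\beta$ coming from the induction and an $x$-stable subspace $\mathbf{W}'$ of dimension $\beta'$ coming from the derivation $\mathrm{Res}^{\alpha+\beta}_{mi,\beta'}$. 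First I would make $Z$ and the two maps explicit, so that ${_{mi}\mathcal{R}}(A\ast B)\simeq\pi_!\,\varpi^\ast(A\boxtimes B)$ up to the shift and Tate twist prescribed by the definitions of the induction and derivation functors.

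Second, I would stratify $Z$ by the single integer $t$ recording the relative position of $\mathbf{W}$ and $\mathbf{W}'$ at the vertex $i$, so that the strata $Z_t$ correspond exactly to the tuples $\lambda_t=(ti,\alpha-ti,(m-t)i,\beta-(m-t)i)$ with $t$ ranging over $a,\ldots,b$. The key structural point, and the reason the conclusion is a chain of triangles rather than a more intricate pattern, is that because $\alpha'=mi$ is concentrated at the single vertex $i$ the relative-position poset governing the general Fang--Lan--Xiao stratification collapses to a total order on the one parameter $t$. Setting $Z_{\geq t}=\bigcup_{s\geq t}Z_s$, I would verify the closure relations showing that $Z_{\geq t+1}$ is closed in $Z_{\geq t}$ with open complement $Z_t$. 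Establishing these closure relations is the main geometric obstacle, since it requires controlling how the incidence conditions degenerate as the dimension of $\mathbf{W}\cap\mathbf{W}'$ jumps.

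Third, for each $t$ the open embedding $j\colon Z_t\hookrightarrow Z_{\geq t}$ with closed complement $i\colon Z_{\geq t+1}\hookrightarrow Z_{\geq t}$ yields the standard recollement triangle $j_!j^\ast\to\mathrm{id}\to i_\ast i^\ast\to$. Applying it to the restriction of $\varpi^\ast(A\boxtimes B)$ to $Z_{\geq t}$ and pushing forward by the induced map to $E_{\bfV_{\beta'}}$ --- using that $i$ is a closed embedding, so that $i_\ast=i_!$ commutes with the $!$-pushforward --- produces exactly the triangles ${_{mi}\mathcal{R}}(A\ast B)_{\lambda_t}\to{_{mi}\mathcal{R}}(A\ast B)_{\geq t}\to{_{mi}\mathcal{R}}(A\ast B)_{\geq t+1}\to$, where ${_{mi}\mathcal{R}}(A\ast B)_{\geq t}$ is defined as the pushforward of the restriction to $Z_{\geq t}$ and ${_{mi}\mathcal{R}}(A\ast B)_{\lambda_t}$ as the contribution of the open stratum $Z_t$. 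Claims (1) and (3) hold by construction, since $Z_{\geq a}=Z$ and $Z_{\geq b}=Z_b$.

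Finally, to prove (2) I would identify the contribution of the single stratum $Z_t$. Over $Z_t$ the two maps factor through the induction and derivation correspondences for the split data $(ti,\alpha-ti)$ and $((m-t)i,\beta-(m-t)i)$, and the relevant fibre of $\pi$ is a bundle over the variety of $i$-subspaces of dimension $t$ inside an $m$-dimensional space at the vertex $i$, whose cohomology contributes the Gaussian binomial $f_{m,t}(v)=[m]_v!/([t]_v!\,[m-t]_v!)$ as the multiplicity recorded by $\mathbf{1}_{pt}^{f_{m,t}(v)}$. Tracking the shifts coming from $m_{\alpha,\beta}$ and the various $\langle\,,\,\rangle$ terms and combining them into $P_t=(\alpha-ti,(m-t)i)$ reproduces the factor $[-P_t](\frac{-P_t}{2})$ and yields the asserted description of ${_{mi}\mathcal{R}}(A\ast B)_{\lambda_t}$. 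This computation is identical to the one establishing Corollary~\ref{FLX_cor}, carried out stratum by stratum; the only change is that each stratum now appears as the third term of a distinguished triangle rather than as a direct summand, which is precisely the statement that survives in the absence of the semisimplicity hypothesis.
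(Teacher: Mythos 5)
Your proposal follows essentially the same route as the paper: form the fibre product of the induction correspondence with the correspondence defining ${_{mi}\mathcal{R}}$ (the paper's $\tilde{F}=E_{\alpha,\beta}''\times_{E_{\mathbf{V}_{\alpha+\beta}}}F_{mi,\alpha+\beta-mi}$, with the subspace $\mathbf{W}_{\beta'}$ fixed), stratify it by the dimension of $\mathbf{W}\cap\mathbf{W}_{\beta'}$ so that $\tilde F_{\geq t}$ is closed by semicontinuity, apply the open--closed recollement triangles and push forward, and identify each stratum's contribution by the computation of Fang--Lan--Xiao underlying Corollary~\ref{FLX_cor}. The argument is correct and matches the paper's proof in all essentials.
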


\begin{proof}
Following the notations in \cite{Fang_Lan_Xiao_The_parity_of_Lusztig_restriction_functor_and_Green_formula}, consider the following diagram
$$\xymatrix{
E_{\mathbf{V}_{\alpha}}\times E_{\mathbf{V}_{\beta}} & E_{\alpha,\beta}'\ar[r]^-{p_{2\alpha,\beta}}\ar[l]_-{p_{1\alpha,\beta}} &  E_{\alpha,\beta}''\ar[r]^-{p_{3\alpha,\beta}} & E_{\mathbf{V}_{\alpha+\beta}}\\
  & & \tilde{F}\ar[u]_-{\tilde{\iota}}\ar[r]^-{\tilde{p}_3} & F_{mi,\alpha+\beta-mi}\ar[u]_-{\iota_{mi,\alpha+\beta-mi}}\ar[d]^-{\kappa_{mi,\alpha+\beta-mi}}\\
   & &  & E_{\mathbf{V}_{\beta'},}
}$$
where $\tilde{F}=E_{\alpha,\beta}''\times_{E_{\mathbf{V}_{\alpha+\beta}}}F_{mi,\alpha+\beta-mi}$
and the following diagram is a Cartesian diagram
$$\xymatrix{E_{\alpha,\beta}''\ar[r]^-{p_{3\alpha,\beta}} & E_{\mathbf{V}_{\alpha+\beta}}\\
  \tilde{F}\ar[u]_-{\tilde{\iota}}\ar[r]^-{\tilde{p}_3} & F_{mi,\alpha+\beta-mi}\ar[u]_-{\iota_{mi,\alpha+\beta-mi}}.}$$
By definition, $\tilde{F}=\{(x,\mathbf{W})\}$, where $x\in E_{\bfV_{\alpha+\beta}}$ and $\mathbf{W}\subset\bfV_{\alpha+\beta}$ is a subspace with dimension vector $\beta$ such that $\mathbf{W}$ and $\mathbf{W}_{\beta'}$ are $x$-stable (\cite{Fang_Lan_Xiao_The_parity_of_Lusztig_restriction_functor_and_Green_formula}).

Now,
\begin{eqnarray*}
{_{mi}\mathcal{R}}(A\ast B)&=&
(\kappa_{mi,\alpha+\beta-mi})_!\iota_{mi,\alpha+\beta-mi}^\ast (p_{3\alpha,\beta})_!(p_{2\alpha,\beta})_\flat p_{1\alpha,\beta}^{\ast}(A\boxtimes B)[M](\frac{M}{2})\\
&=&(\kappa_{mi,\alpha+\beta-mi})_! (\tilde{p}_{3})_!\tilde{\iota}^\ast(p_{2\alpha,\beta})_\flat p_{1\alpha,\beta}^{\ast}(A\boxtimes B)[M](\frac{M}{2}),
\end{eqnarray*}
where $M=m_{\alpha,\beta}-\langle{mi,\alpha+\beta-mi}\rangle$.

%

Let $$\tilde{F}_{\lambda_t}=\{(x,\mathbf{W})\in\tilde{F}\,\,|\,\,\underline{\dim}(\mathbf{W}\cap\mathbf{W}_{\beta'})=\beta-(m-t)i\}.$$
It is clear that $\tilde{F}$ is the disjoint of $\tilde{F}_{\lambda_t}$.
Let $\tilde{F}_{\leq t}=\bigcup_{l=a}^{t}\tilde{F}_{\lambda_l}$ and $\tilde{F}_{\geq t}=\bigcup_{l=t}^{b}\tilde{F}_{\lambda_l}$. Then $\tilde{F}_{\leq t}$ is open
and $\tilde{F}_{\geq t}$ is closed in $\tilde{F}$. At the same time,  $\tilde{F}_{\lambda_t}$ is closed in $\tilde{F}_{\leq t}$ and open in $\tilde{F}_{\geq t}$, respectively.
These embeddings are denoted by $j_{\lambda_t}:\tilde{F}_{\lambda_t}\rightarrow\tilde{F}$, $j_{\leq t}:\tilde{F}_{\leq t}\rightarrow\tilde{F}$ and $j_{\geq t}:\tilde{F}_{\geq t}\rightarrow\tilde{F}$.

For any complex $L$ on $\tilde{F}$, we have the following distinguished triangles (Section 1.4.5 in \cite{Bernstein_Lunts_Equivariant_sheaves_and_functors})
$$(j_{{\lambda_t}})_!j^\ast_{\lambda_t}L\rightarrow (j_{\geq t})_!j^\ast_{_{\geq t}}L\rightarrow (j_{\geq t+1})_!j^\ast_{_{\geq t+1}}L\rightarrow.$$

When $L=\tilde{\iota}^\ast(p_{2\alpha,\beta})_\flat p_{1\alpha,\beta}^{\ast}(A\boxtimes B)$,
we get the following distinguished triangles
\begin{equation}\label{formula_proof_1}
{_{mi}\mathcal{R}}(A\ast B)_{\lambda_t}\rightarrow{_{mi}\mathcal{R}}(A\ast B)_{\geq t}\rightarrow {_{mi}\mathcal{R}}(A\ast B)_{\geq t+1}\rightarrow,
\end{equation}
where
$${_{mi}\mathcal{R}}(A\ast B)_{\lambda_t}=(\kappa_{mi,\alpha+\beta-mi})_! (\tilde{p}_{3})_!(j_{{\lambda_t}})_!j^\ast_{\lambda_t}\tilde{\iota}^\ast(p_{2\alpha,\beta})_\flat p_{1\alpha,\beta}^{\ast}
(A\boxtimes B)[M](\frac{M}{2})$$
and
$${_{mi}\mathcal{R}}(A\ast B)_{\geq t}=(\kappa_{mi,\alpha+\beta-mi})_! (\tilde{p}_{3})_!(j_{\geq t})_!j^\ast_{\geq t}\tilde{\iota}^\ast(p_{2\alpha,\beta})_\flat p_{1\alpha,\beta}^{\ast}
(A\boxtimes B)[M](\frac{M}{2}).$$

In the proof of Theorem 3.1 in \cite{Fang_Lan_Xiao_The_parity_of_Lusztig_restriction_functor_and_Green_formula}, Fang-Lan-Xiao proved that
\begin{eqnarray}\label{formula_proof_2}
&&{_{mi}\mathcal{R}}(A\ast B)_{\lambda_t}\\
&=&(\kappa_{mi,\alpha+\beta-mi})_! (\tilde{p}_{3})_!(j_{{\lambda_t}})_!j^\ast_{\lambda_t}\tilde{\iota}^\ast(p_{2\alpha,\beta})_\flat p_{1\alpha,\beta}^{\ast}
(A\boxtimes B)[M](\frac{M}{2})\nonumber\\
&\simeq&(L_{ti}\ast L_{(m-t)i})\boxtimes({_{ti}\mathcal{R}}(A)\ast{_{(m-t)i}\mathcal{R}}(B))[-P_t](-\frac{P_t}{2})\nonumber\\
&\simeq&L^{f_{m,t}(v)}_{mi}\boxtimes({_{ti}\mathcal{R}}(A)\ast{_{(m-t)i}\mathcal{R}}(B))[-P_t](-\frac{P_t}{2})\nonumber\\
&\simeq&\mathbf{1}_{pt}^{f_{m,t}(v)}\boxtimes({_{ti}\mathcal{R}}(A)\ast{_{(m-t)i}\mathcal{R}}(B))[-P_t](-\frac{P_t}{2}).\nonumber
\end{eqnarray}

Formulas (\ref{formula_proof_1}) and (\ref{formula_proof_2}) imply the desired result.

\end{proof}

\begin{remark}
In \cite{Fang_Lan_Xiao_The_parity_of_Lusztig_restriction_functor_and_Green_formula}, Fang-Lan-Xiao constructed a collection of distinguished triangles before Lemma 3.6. In the proof of Theorem \ref{MT}, the varieties $\tilde{F}_{\lambda_t}$ have different dimensions. Hence, the distinguished triangles (\ref{formula_proof_1}) are special cases of those constructed in \cite{Fang_Lan_Xiao_The_parity_of_Lusztig_restriction_functor_and_Green_formula}.
\end{remark}

\subsection{}

Similarly, fix $\alpha,\beta,\alpha'\in\mathbb{N}I$ such that $\alpha+\beta=\alpha'+mi$. Fix $I$-graded vector spaces ${\mathbf{V}_{\alpha}},{\mathbf{V}_{\beta}},{\mathbf{V}_{\alpha+\beta}},{\mathbf{V}_{\alpha'}}$ with dimension vectors $\alpha,\beta,\alpha+\beta,\alpha'$, respectively.

\begin{theorem}
For any $A\in\cD_{G_{\bfV_\alpha}}^{b}(E_{\bfV_\alpha})$ and $B\in\cD_{G_{\bfV_\beta}}^{b}(E_{\bfV_\beta})$,
we have the following distinguished triangles in $\cD_{G_{\bfV_{\alpha'}}}^{b}(E_{\bfV_{\alpha'}})$ for $t=a+1,\ldots,b-1,b$
$${\mathcal{R}_{mi}}(A\ast B)_{\lambda_t}\rightarrow{\mathcal{R}_{mi}}(A\ast B)_{\leq t}\rightarrow {\mathcal{R}_{mi}}(A\ast B)_{\leq t-1}\rightarrow,$$
such that
\begin{enumerate}
  \item[(1)]${\mathcal{R}_{mi}}(A\ast B)_{\leq b}={\mathcal{R}_{mi}}(A\ast B)$;
  \item[(2)]${\mathcal{R}_{mi}}(A\ast B)_{\lambda_t}\simeq\mathbf{1}_{pt}^{f_{m,t}(v)}\boxtimes({\mathcal{R}_{ti}}(A)\ast {\mathcal{R}_{(m-t)i}}(B))[-P'_{t}](\frac{-P'_{t}}{2})$ for $t=a,a+1,\ldots,b$;
  \item[(3)]${\mathcal{R}_{mi}}(A\ast B)_{\leq a}={\mathcal{R}_{mi}}(A\ast B)_{\lambda_a}$.
\end{enumerate}
\end{theorem}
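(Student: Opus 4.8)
The plan is to run the argument of Theorem~\ref{MT} essentially verbatim, with the ``sub'' derivation functor ${_{mi}\mathcal{R}}$ replaced throughout by the ``quotient'' derivation functor ${\mathcal{R}_{mi}}$; on the level of Theorem~\ref{FLX_thm} this amounts to specializing $\beta'=mi$ rather than $\alpha'=mi$, as in Corollary~\ref{FLX_cor_1}. First I would fix an $x$-stable subspace $\mathbf{W}_{mi}\subset\bfV_{\alpha+\beta}$ of dimension vector $mi$ (the one defining ${\mathcal{R}_{mi}}$) and form the Cartesian square
$$\xymatrix{E_{\alpha,\beta}''\ar[r]^-{p_{3\alpha,\beta}} & E_{\mathbf{V}_{\alpha+\beta}}\\ \tilde{F}\ar[u]_-{\tilde{\iota}}\ar[r]^-{\tilde{p}_3} & F_{\alpha+\beta-mi,mi}\ar[u]_-{\iota_{\alpha+\beta-mi,mi}}}$$
with the quotient map $\kappa_{\alpha+\beta-mi,mi}$ in place of $\kappa_{mi,\alpha+\beta-mi}$, so that $\tilde{F}=\{(x,\mathbf{W})\}$ with $\mathbf{W}\subset\bfV_{\alpha+\beta}$ an $x$-stable subspace of dimension $\beta$ and $\mathbf{W}_{mi}$ also $x$-stable. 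Proper base change, exactly as in Theorem~\ref{MT}, then gives ${\mathcal{R}_{mi}}(A\ast B)=(\kappa_{\alpha+\beta-mi,mi})_!(\tilde{p}_3)_!\tilde{\iota}^\ast(p_{2\alpha,\beta})_\flat p_{1\alpha,\beta}^{\ast}(A\boxtimes B)[M'](\frac{M'}{2})$ with $M'=m_{\alpha,\beta}-\langle\alpha+\beta-mi,mi\rangle$.

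Next I would stratify $\tilde{F}$ by the interaction of the varying $\mathbf{W}$ with the fixed $\mathbf{W}_{mi}$, setting
$$\tilde{F}_{\lambda_t}=\{(x,\mathbf{W})\in\tilde{F}\,|\,\underline{\dim}(\mathbf{W}\cap\mathbf{W}_{mi})=(m-t)i\},$$
which records that $(m-t)i$ of $\mathbf{W}_{mi}$ lies in the ``$B$-subspace'' $\mathbf{W}$ while the complementary $ti$ injects into the ``$A$-quotient'' $\bfV_{\alpha+\beta}/\mathbf{W}$; this is precisely the term $\lambda_t=(\alpha-ti,ti,\beta-(m-t)i,(m-t)i)\in\mathcal{N}$. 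The one genuine difference from Theorem~\ref{MT} is a sign in the semicontinuity: since $\underline{\dim}(\mathbf{W}\cap\mathbf{W}_{mi})\geq(m-t)i$ is a \emph{closed} condition, here $\tilde{F}_{\leq t}=\bigcup_{l=a}^{t}\tilde{F}_{\lambda_l}$ is closed and $\tilde{F}_{\geq t}$ is open --- the reverse of the left-derivation case, which is exactly why the triangles take the $\leq$-form. Consequently $\tilde{F}_{\lambda_t}$ is open in $\tilde{F}_{\leq t}$ with closed complement $\tilde{F}_{\leq t-1}$, and applying the open-closed triangle (Section~1.4.5 in \cite{Bernstein_Lunts_Equivariant_sheaves_and_functors}) to $L=\tilde{\iota}^\ast(p_{2\alpha,\beta})_\flat p_{1\alpha,\beta}^{\ast}(A\boxtimes B)$ and pushing it forward through $(\kappa_{\alpha+\beta-mi,mi})_!(\tilde{p}_3)_!$ produces the asserted triangles for $t=a+1,\ldots,b$, with items (1) and (3) immediate from $\tilde{F}_{\leq b}=\tilde{F}$ and $\tilde{F}_{\leq a}=\tilde{F}_{\lambda_a}$.

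Finally, the identification (2) would be read off from the stratumwise computation of Fang-Lan-Xiao in \cite{Fang_Lan_Xiao_The_parity_of_Lusztig_restriction_functor_and_Green_formula}, now in the $\beta'=mi$ specialization: on $\tilde{F}_{\lambda_t}$ the two pieces of $\mathbf{W}_{mi}$ decouple, contributing ${\mathcal{R}_{ti}}(A)$ and ${\mathcal{R}_{(m-t)i}}(B)$, which the induction recombines into ${\mathcal{R}_{ti}}(A)\ast{\mathcal{R}_{(m-t)i}}(B)$, while the constant-sheaf factor $L_{ti}\ast L_{(m-t)i}\simeq L_{mi}^{f_{m,t}(v)}$ splits off $L_{mi}$ and leaves $\mathbf{1}_{pt}^{f_{m,t}(v)}$; the shift $[-P'_t](-\frac{P'_t}{2})$ with $P'_t=(ti,\beta-(m-t)i)$ is bookkept by the codimension of the stratum together with $M'$. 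The point that needs the most care --- and the only one --- is verifying that this FLX identification, originally derived under a semisimplicity hypothesis, is in fact an equivalence of functors valid for arbitrary $A,B\in\cD^b$; but this is the same verification already carried out for the ${_{mi}\mathcal{R}}$ case in Theorem~\ref{MT}, so no new obstacle arises beyond keeping the reversed open/closed bookkeeping straight.
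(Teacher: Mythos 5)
Your proposal is correct and follows essentially the same route as the paper: the paper offers no separate proof of this statement, leaving it as the mirror image of the argument for Theorem~\ref{MT}, and your adaptation --- specializing $\beta'=mi$, basing the Cartesian square on $F_{\alpha+\beta-mi,mi}$, stratifying $\tilde{F}$ by $\underline{\dim}(\mathbf{W}\cap\mathbf{W}_{mi})=(m-t)i$, and observing that $\tilde{F}_{\leq t}$ is now closed so the open--closed triangles run in the $\leq$ direction --- is exactly that mirror, with the stratumwise identification again delegated to the proof of Theorem~3.1 in \cite{Fang_Lan_Xiao_The_parity_of_Lusztig_restriction_functor_and_Green_formula}.
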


\subsection{}

In Section 12.2 of \cite{Lusztig_Introduction_to_quantum_groups}, Lusztig introduced a geometric pairing $\{A,B\}$ for  $A,B\in\cD_{G_{\bfV}}^{b,ss}(E_{\bfV})$, where $\bfV$ is a fixed $I$-graded vector space with dimension vector $\nu\in\mathbb{N}I$.

\begin{proposition}[Section 8.1.10 in \cite{Lusztig_Introduction_to_quantum_groups}, see also Proposition 1.14 in \cite{Schiffmann_Lectures2}]
Let $A$ and $B$ be two simple perverse sheaves in $\cD_{G_{\bfV}}^{b,ss}(E_{\bfV})$. Then
$$\{A,B\}\in\left\{\begin{array}{cc}
           1+v\mathbb{N}[[v]] & \textrm{if $A\simeq D(B)$} \\
           v\mathbb{N}[[v]] & \textrm{otherwise}.
         \end{array}
\right.$$
\end{proposition}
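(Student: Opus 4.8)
The plan is to reduce $\{A,B\}$ to the Poincaré series of an equivariant $\mathrm{Hom}$-space between $A$ and $D(B)$, and then to extract its constant term by Schur's lemma in the perverse heart. First I would recall Lusztig's definition of the pairing from Section~12.2 of \cite{Lusztig_Introduction_to_quantum_groups} and reinterpret it as
$$\{A,B\}=\sum_{n\in\mathbb{Z}}\dim\mathrm{Hom}_{\cD_{G_\bfV}^b(E_\bfV)}(A,D(B)[n])\,v^{n}.$$
Granting this identification, every coefficient is the dimension of an honest finite-dimensional $\overline{\bbQ}_l$-vector space, hence a nonnegative integer. It then remains only to show that no negative powers of $v$ occur and to identify the constant term.

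Next I would rule out negative powers of $v$. Verdier duality $D$ is a contravariant auto-equivalence of $\cD_{G_\bfV}^b(E_\bfV)$ that is exact for the equivariant perverse $t$-structure, so $D(B)$ is again a simple perverse sheaf and both $A$ and $D(B)$ lie in the heart $\mathrm{Perv}_{G_\bfV}(E_\bfV)$. For any two objects $P,Q$ of the heart of a $t$-structure one has $\mathrm{Hom}(P,Q[n])=0$ whenever $n<0$, a formal consequence of the $t$-structure axioms; applying this with $P=A$ and $Q=D(B)$ kills every negative power, so that $\{A,B\}\in\mathbb{N}[[v]]$.

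Then I would compute the constant term. Its coefficient is $\dim\mathrm{Hom}_{\mathrm{Perv}_{G_\bfV}(E_\bfV)}(A,D(B))$, the space of morphisms in the abelian category of equivariant perverse sheaves, which is a $\overline{\bbQ}_l$-linear category of finite length with finite-dimensional Hom-spaces. Since $A$ and $D(B)$ are both simple objects, Schur's lemma gives $\dim\mathrm{Hom}(A,D(B))=1$ when $A\simeq D(B)$ and $0$ otherwise. Together with the previous step this yields $\{A,B\}\in 1+v\mathbb{N}[[v]]$ if $A\simeq D(B)$ and $\{A,B\}\in v\mathbb{N}[[v]]$ otherwise, which is exactly the claim.

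The main obstacle is the reinterpretation in the first step: Lusztig's pairing is defined over a finite field through Frobenius traces and the weight filtration, and one must check that this weight-theoretic definition coincides with the cohomological Poincaré series above, with no cancellation. This is where purity enters. By Gabber's purity the simple perverse sheaves $A$ and $B$ are pure, and Deligne's Weil~II then forces the Frobenius to act on $\mathrm{Hom}(A,D(B)[n])$ with weight $n$ after the normalization built into the convention $v^{\pm}[L]=[L[\pm1](\pm\frac{1}{2})]$; consequently the weight grading agrees with the cohomological grading and the two descriptions of $\{A,B\}$ match term by term. Once purity and the sheaf--function dictionary are in place, the remaining steps are formal.
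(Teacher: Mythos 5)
The paper offers no proof of this proposition: it is imported verbatim from Lusztig (Section 8.1.10) and Schiffmann (Proposition 1.14), so there is no in-paper argument to compare against, and your attempt has to be measured against the standard proof in those references. Your second and third steps are exactly that standard proof and are correct: $D$ is $t$-exact for the (self-dual) perverse $t$-structure and sends simples to simples; $\mathrm{Hom}(P,Q[n])=0$ for $n<0$ is formal for objects of the heart of any $t$-structure; and Schur's lemma in the abelian, finite-length, $\overline{\bbQ}_l$-linear category of equivariant perverse sheaves gives the constant term $\delta_{A\simeq D(B)}$. You are also right that in the equivariant setting the positive-degree groups $\mathrm{Hom}(A,D(B)[n])$ can be nonzero for infinitely many $n$, which is why the answer lands in $\bbN[[v]]$ rather than $\bbN[v]$.

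The genuine weak point is the first step and the way you defend it at the end. Lusztig's pairing is not defined through Frobenius traces and a weight filtration; in his setup $\{A,B\}$ is a generating series in the dimensions of equivariant compactly-supported hypercohomology of $A\otimes B$, computed on Bernstein--Lunts approximation spaces $(E_\bfV\times\Gamma)/G_\bfV$. So the purity/Weil II paragraph addresses a difficulty that is not there, and it does not supply the identification you actually need, namely (up to Lusztig's normalization)
$$\dim \mathbb{H}^{-n}_{c}\bigl((E_\bfV\times\Gamma)/G_\bfV,\,A\otimes B\bigr)=\dim\mathrm{Hom}_{\cD^b_{G_\bfV}(E_\bfV)}\bigl(A,D(B)[n]\bigr),$$
which follows from global Verdier duality $\mathrm{Hom}(K[-n],\omega)\cong\mathbb{H}^{-n}_c(\,\cdot\,,K)^{\vee}$ combined with the adjunction $\mathrm{Hom}(A\otimes B,\omega[n])\cong\mathrm{Hom}(A,R\mathcal{H}om(B,\omega)[n])=\mathrm{Hom}(A,D(B)[n])$, checked compatibly with the approximation spaces. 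With that substitution the argument closes; as written, the linchpin reduction is asserted and then justified with the wrong tool.
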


\begin{corollary}\label{BF_cor_2}
Let $A$ and $A'$ be two semisimple complexes in $\cD_{G_{\bfV}}^{b,ss}(E_{\bfV})$. If $\{A,B\}=\{A',B\}$  for any simple perverse sheaf $B$ in $\cD_{G_{\bfV}}^{b,ss}(E_{\bfV})$, then $A\simeq A'$.
\end{corollary}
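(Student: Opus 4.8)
The plan is to exploit the near-orthonormality recorded in the preceding proposition: it forces the Gram matrix of Lusztig's pairing, restricted to the simple perverse sheaves occurring in $A$ and $A'$, to be invertible, so that the family of pairings $\{A,B\}$ determines the graded multiplicities of $A$. Concretely, I would first invoke Krull--Schmidt for semisimple complexes: each of $A$ and $A'$ decomposes, uniquely up to isomorphism, as a finite direct sum over a common finite set of pairwise non-isomorphic simple perverse sheaves $C_1,\dots,C_n$, say $A\simeq\bigoplus_{i=1}^{n}C_i^{a_i(v)}$ and $A'\simeq\bigoplus_{i=1}^{n}C_i^{a_i'(v)}$ with $a_i(v),a_i'(v)\in\mathbb{Z}_{\geq0}[v,v^{-1}]$ in the notation $L^{f(v)}$ introduced earlier. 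It then suffices to show $a_i(v)=a_i'(v)$ for all $i$.

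Next I would record two elementary properties of the geometric pairing: additivity in each variable, and compatibility with shift and Tate twist, namely $\{L[1](\tfrac12),B\}=v^{\epsilon}\{L,B\}$ for a fixed sign $\epsilon\in\{1,-1\}$ determined by the conventions of \cite{Lusztig_Introduction_to_quantum_groups}. Iterating and summing then gives $\{A,B\}=\sum_{i}a_i(v^{\epsilon})\{C_i,B\}$, and likewise for $A'$. Writing $c_i(v)=a_i(v)-a_i'(v)\in\mathbb{Z}[v,v^{-1}]$, the hypothesis becomes $\sum_{i}c_i(v^{\epsilon})\{C_i,B\}=0$ for every simple perverse sheaf $B$.

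Finally I would specialize to $B=D(C_j)$ for $j=1,\dots,n$; since $D$ is an involution preserving simple perverse sheaves, each $D(C_j)$ is a legitimate test object with $D(B)=C_j$. Set $G_{ij}=\{C_i,D(C_j)\}$. The preceding proposition gives $G_{jj}\in 1+v\mathbb{N}[[v]]$ and $G_{ij}\in v\mathbb{N}[[v]]$ for $i\neq j$, so $G=I+M$ with every entry of $M$ in $v\mathbb{N}[[v]]$; hence $G$ is invertible over $\mathbb{Z}[[v]]$ via the $v$-adically convergent series $\sum_{k\geq0}(-M)^{k}$. The relations $\sum_{i}c_i(v^{\epsilon})G_{ij}=0$ read $\mathbf{c}\,G=0$ for the row vector $\mathbf{c}=(c_i(v^{\epsilon}))_i$ over $\mathbb{Z}((v))$, whence $\mathbf{c}=\mathbf{c}\,G\,G^{-1}=0$; thus $c_i(v^{\epsilon})=0$, so $c_i=0$ and $a_i=a_i'$ for all $i$, giving $A\simeq A'$.

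I expect the main obstacle to lie not in the linear algebra but in supplying the two structural inputs: one must extract the additivity and shift-compatibility of the pairing from its definition in Section 12.2 of \cite{Lusztig_Introduction_to_quantum_groups}, rather than from the proposition alone. The crucial point is the invertibility of $G$, where the dichotomy $1+v\mathbb{N}[[v]]$ versus $v\mathbb{N}[[v]]$ of the proposition is exactly what makes $G$ unipotent modulo $v$, hence invertible over $\mathbb{Z}[[v]]$; note also that only finitely many simples $C_1,\dots,C_n$ intervene, so the reduction to a finite invertible system is automatic and no convergence issue beyond the $v$-adic one arises.
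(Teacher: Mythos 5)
Your argument is correct and is precisely the standard deduction that the paper leaves implicit: the corollary is stated without proof, being intended as an immediate consequence of the preceding proposition via the unipotent-mod-$v$ Gram matrix $\bigl(\{C_i,D(C_j)\}\bigr)_{i,j}$, which is exactly what you carry out. Your care with the shift/twist normalization (the sign $\epsilon$) and with working over $\mathbb{Z}((v))$ so that Laurent-polynomial multiplicities can be tested against a matrix invertible over $\mathbb{Z}[[v]]$ supplies all the details the paper omits.
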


\begin{proposition}[Lemma 12.2.2 in \cite{Lusztig_Introduction_to_quantum_groups}, see also Proposition 1.15 in \cite{Schiffmann_Lectures2}]\label{BF_prop_1}
Fix $\alpha,\beta\in\mathbb{N}I$ and $I$-graded vector spaces ${\mathbf{V}_{\alpha}},{\mathbf{V}_{\beta}},{\mathbf{V}_{\alpha+\beta}}$ with dimension vectors $\alpha,\beta,\alpha+\beta$, respectively.
For semisimple complexes $A\in\cD_{G_{\mathbf{V}_{\alpha}}}^{b,ss}(E_{\mathbf{V}_{\alpha}})$, $B\in\cD_{G_{\mathbf{V}_{\beta}}}^{b,ss}(E_{\mathbf{V}_{\beta}})$ and $C\in\cD_{G_{\mathbf{V}_{\alpha+\beta}}}^{b,ss}(E_{\mathbf{V}_{\alpha+\beta}})$, we have $$\{A\ast B,C\}=\{A\boxtimes B,\mathrm{Res}^{\alpha+\beta}_{\alpha,\beta}(C)\}.$$
\end{proposition}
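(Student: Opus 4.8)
The plan is to reinterpret the geometric pairing as a graded Hom-form and to recognize the asserted identity as the statement that induction $\ast=\mathrm{Ind}^{\alpha+\beta}_{\alpha,\beta}$ and restriction $\mathrm{Res}^{\alpha+\beta}_{\alpha,\beta}$ are adjoint with respect to it. Concretely, I would first unwind the definition of $\{-,-\}$ from Section 12.2 of \cite{Lusztig_Introduction_to_quantum_groups} and write it as the graded dimension of the equivariant $\mathrm{RHom}_{G_{\bfV}}(X,D(Y))$, so that $A\simeq D(B)$ contributes the leading $1$ (the scalar endomorphisms of a simple object) and everything else lies in $v\mathbb{N}[[v]]$. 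Via the Künneth formula this form factors on a product $E_{\bfV_\alpha}\times E_{\bfV_\beta}$, so $\{A\boxtimes B,\mathrm{Res}^{\alpha+\beta}_{\alpha,\beta}(C)\}$ is well defined termwise. With this description the proposition becomes the functorial isomorphism $\mathrm{RHom}(\mathrm{Ind}^{\alpha+\beta}_{\alpha,\beta}(A\boxtimes B),D(C))\simeq\mathrm{RHom}(A\boxtimes B,D(\mathrm{Res}^{\alpha+\beta}_{\alpha,\beta}(C)))$, graded over all shifts, and it suffices to produce a natural isomorphism of functors $\mathrm{Ind}^{R}\simeq D\circ\mathrm{Res}^{\alpha+\beta}_{\alpha,\beta}\circ D$, where $\mathrm{Ind}^{R}$ is the right adjoint of $\mathrm{Ind}^{\alpha+\beta}_{\alpha,\beta}$.

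Next I would compute $\mathrm{Ind}^{R}$ directly from the induction diagram $E_{\bfV_\alpha}\times E_{\bfV_\beta}\xleftarrow{p_{1\alpha,\beta}}E'_{\alpha,\beta}\xrightarrow{p_{2\alpha,\beta}}E''_{\alpha,\beta}\xrightarrow{p_{3\alpha,\beta}}E_{\bfV_{\alpha+\beta}}$. Using the adjunction $(p_{3\alpha,\beta})_{!}\dashv p_{3\alpha,\beta}^{!}$, the fact that $(p_{2\alpha,\beta})_{\flat}$ is an equivalence inverse to $p_{2\alpha,\beta}^{\ast}$, and $p_{1\alpha,\beta}^{\ast}\dashv(p_{1\alpha,\beta})_{\ast}$, one obtains $\mathrm{Ind}^{R}\simeq(p_{1\alpha,\beta})_{\ast}\,p_{2\alpha,\beta}^{\ast}\,p_{3\alpha,\beta}^{!}\,[-m_{\alpha,\beta}](-\tfrac{m_{\alpha,\beta}}{2})$. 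On the other side, Verdier duality converts the restriction functor into $D\circ\mathrm{Res}^{\alpha+\beta}_{\alpha,\beta}\circ D\simeq(\kappa_{\alpha,\beta})_{\ast}\,\iota_{\alpha,\beta}^{!}\,[\langle\alpha,\beta\rangle](\tfrac{\langle\alpha,\beta\rangle}{2})$, since $D(\kappa_{\alpha,\beta})_{!}D=(\kappa_{\alpha,\beta})_{\ast}$ and $D\iota_{\alpha,\beta}^{\ast}D=\iota_{\alpha,\beta}^{!}$. The two correspondences are linked through the common space $E''_{\alpha,\beta}$: the section $s\colon F_{\alpha,\beta}\to E'_{\alpha,\beta}$ determined by the fixed frame $(\mathbf{W},R'',R')$ satisfies $p_{1\alpha,\beta}\circ s=\kappa_{\alpha,\beta}$ and $p_{3\alpha,\beta}\circ p_{2\alpha,\beta}\circ s=\iota_{\alpha,\beta}$, and the $G_{\bfV_{\alpha+\beta}}$-equivariant structure identifies the quotient stack $[E''_{\alpha,\beta}/G_{\bfV_{\alpha+\beta}}]$ with $[F_{\alpha,\beta}/P]$ for $P$ the parabolic stabilizing $\mathbf{W}$. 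Under this slice identification $p_{3\alpha,\beta}^{!}$ is matched with $\iota_{\alpha,\beta}^{!}$ and $(p_{1\alpha,\beta})_{\ast}p_{2\alpha,\beta}^{\ast}$ with $(\kappa_{\alpha,\beta})_{\ast}$, which is the geometric heart of the identification $\mathrm{Ind}^{R}\simeq D\circ\mathrm{Res}^{\alpha+\beta}_{\alpha,\beta}\circ D$.

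The remaining work is to match the cohomological shifts and Tate twists exactly. Because $F_{\alpha,\beta}$ is a linear, hence smooth, subvariety of $E_{\bfV_{\alpha+\beta}}$, the functor $\iota_{\alpha,\beta}^{!}$ differs from $\iota_{\alpha,\beta}^{\ast}$ by a shift and twist governed by its codimension, and the principal bundle $p_{2\alpha,\beta}$ contributes the dimension of $G_{\bfV_\alpha}\times G_{\bfV_\beta}$; these must combine with $-m_{\alpha,\beta}$ on one side and $\langle\alpha,\beta\rangle$ on the other so that no stray power of $v$ survives. Using $m_{\alpha,\beta}=\sum_{i}\alpha_i\beta_i+\sum_{\rho\in H}\alpha_{s(\rho)}\beta_{t(\rho)}$ and $\langle\alpha,\beta\rangle=\sum_{i}\alpha_i\beta_i-\sum_{\rho\in H}\alpha_{s(\rho)}\beta_{t(\rho)}$, one checks that the surplus dimensions on the relevant fibres account precisely for the difference, giving the isomorphism on the nose. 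I expect this bookkeeping, together with the careful treatment of the equivariant (hyper)cohomology and the bundle descent through $(p_{2\alpha,\beta})_{\flat}$, to be the main obstacle: the geometry of the two correspondences is classical, but making the normalizations cancel cleanly is where all the effort lies. As a sanity check one may instead pass to finite fields via the sheaf--function correspondence, where $\{-,-\}$ becomes Green's Hopf pairing on the Ringel--Hall algebra and the statement reduces to the classical adjunction between multiplication and comultiplication established by Green in \cite{green1995hall}.
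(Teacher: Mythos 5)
The paper does not actually prove this proposition: it is imported verbatim from Lemma 12.2.2 of \cite{Lusztig_Introduction_to_quantum_groups} (see also Proposition 1.15 of \cite{Schiffmann_Lectures2}), so there is no internal proof to compare against. Judged on its own terms, your strategy --- read $\{-,-\}$ as the graded dimension of equivariant $\mathrm{RHom}(-,D(-))$ and deduce the identity from an adjunction $\mathrm{Ind}^{\alpha+\beta}_{\alpha,\beta}\dashv D\circ\mathrm{Res}^{\alpha+\beta}_{\alpha,\beta}\circ D$ --- is a reasonable structural repackaging of the standard argument. But be aware that Lusztig's own proof works with compactly supported cohomology of $A\otimes B$, the projection formula for the proper map $p_{3\alpha,\beta}$, and descent along the principal bundle $p_{2\alpha,\beta}$; that route uses only $\ast$-pullbacks and $!$-pushforwards, and so never has to confront $p_{3\alpha,\beta}^{!}$ at all. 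Your route does, and $p_{3\alpha,\beta}$ is proper but not smooth (its fibres are quiver Grassmannians), so $p_{3\alpha,\beta}^{!}$ cannot be traded for a shifted $p_{3\alpha,\beta}^{\ast}$ globally; everything must be argued on the slice.

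The genuine gap is that the entire content of the proposition has been compressed into the single asserted isomorphism $\mathrm{Ind}^{R}\simeq D\circ\mathrm{Res}^{\alpha+\beta}_{\alpha,\beta}\circ D$, which is a strictly stronger, functor-level statement, and which you only gesture at. Concretely: (i) identifying $(p_{1\alpha,\beta})_{\ast}p_{2\alpha,\beta}^{\ast}p_{3\alpha,\beta}^{!}$ with $(\kappa_{\alpha,\beta})_{\ast}\iota_{\alpha,\beta}^{!}$ through the closed embedding $j\colon F_{\alpha,\beta}\hookrightarrow E''_{\alpha,\beta}$ requires the induction equivalence between $P$-equivariant complexes on $F_{\alpha,\beta}$ and $G_{\bfV_{\alpha+\beta}}$-equivariant complexes on $E''_{\alpha,\beta}$, a comparison of $j^{!}$ with $j^{\ast}$, and a treatment of the unipotent radical of $P$ (since $\kappa_{\alpha,\beta}$ is only equivariant through the Levi quotient); this is exactly where the discrepancy $m_{\alpha,\beta}-\langle\alpha,\beta\rangle=2\sum_{\rho\in H}\alpha_{s(\rho)}\beta_{t(\rho)}$ must be absorbed, and you assert rather than perform this bookkeeping, so as written the identity is established only up to an undetermined monomial in $v$ --- which is precisely what the proposition pins down. (ii) Lusztig's pairing is defined as a limit over finite-dimensional approximations to the classifying space and takes values in $\mathbb{Q}((v))$ (e.g.\ $\{L_{mi},L_{mi}\}=\prod_{k=1}^{m}(1-v^{2k})^{-1}$); your $\mathrm{RHom}$ reading has to be checked to commute with that limit. (iii) The closing sanity check is circular for the purpose of fixing normalizations: Green's Hopf-pairing compatibility is the decategorification of this proposition, not independent evidence for the shifts and twists. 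None of these is unfixable, but each must be supplied before the argument is a proof.
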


\begin{corollary}\label{BF_cor_1}
Fix $\alpha,\alpha'\in\mathbb{N}I$ such that $\alpha'=\alpha+mi$ and $I$-graded vector spaces ${\mathbf{V}_{\alpha}}$ and ${\mathbf{V}_{\alpha'}}$ with dimension vectors $\alpha$ and $\alpha'$, respectively.
For $A\in\cD_{G_{\mathbf{V}_{\alpha}}}^{b,ss}(E_{\mathbf{V}_{\alpha}})$ and $B\in\cD_{G_{\mathbf{V}_{\alpha'}}}^{b,ss}(E_{\mathbf{V}_{\alpha'}})$, we have $$\{L_{mi}\ast A,B\}=\prod_{k=1}^{m}\frac{1}{1-v^{2k}}\{A,{_{mi}\mathcal{R}}(B)\}$$ and
$$\{A\ast L_{mi},B\}=\prod_{k=1}^{m}\frac{1}{1-v^{2k}}\{A,{\mathcal{R}_{mi}}(B)\}.$$
\end{corollary}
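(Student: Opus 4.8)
The plan is to reduce both identities to the adjunction between induction and restriction together with a single self-pairing computation on a point. For the first identity, write $\alpha'=mi+\alpha$ and apply Proposition~\ref{BF_prop_1} with the factor $mi$ placed first: taking the product $A\ast B$ there to be $L_{mi}\ast A=\mathrm{Ind}^{\alpha'}_{mi,\alpha}(L_{mi}\boxtimes A)$ and $C=B$, I would obtain
$$\{L_{mi}\ast A,B\}=\{L_{mi}\boxtimes A,\mathrm{Res}^{\alpha'}_{mi,\alpha}(B)\}.$$

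Next I would substitute the defining relation for the derivation functor recorded in Section~3, namely $\mathrm{Res}^{\alpha'}_{mi,\alpha}(B)=L_{mi}\boxtimes{_{mi}\mathcal{R}}(B)$, so that the right-hand side becomes $\{L_{mi}\boxtimes A,\,L_{mi}\boxtimes{_{mi}\mathcal{R}}(B)\}$. I would then invoke the multiplicativity of Lusztig's pairing under external tensor product, $\{X_1\boxtimes Y_1,X_2\boxtimes Y_2\}=\{X_1,X_2\}\{Y_1,Y_2\}$, valid for the product group $G_{\bfV_{mi}}\times G_{\bfV_\alpha}$ and following from the K\"unneth isomorphism in the equivariant derived category. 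This factors the expression as $\{L_{mi},L_{mi}\}\cdot\{A,{_{mi}\mathcal{R}}(B)\}$, isolating the scalar $\{L_{mi},L_{mi}\}$.

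It remains to evaluate $\{L_{mi},L_{mi}\}$. Because $Q$ is acyclic there are no loops at $i$, so $E_{\bfV_{mi}}$ is a single point on which $G_{\bfV_{mi}}=GL_m$ acts trivially, and $L_{mi}$ is the constant sheaf there. Hence $\{L_{mi},L_{mi}\}$ is the $GL_m$-equivariant self-pairing of $\mathbf{1}$ on a point, which equals the Poincar\'e series of $H^\ast_{GL_m}(\mathrm{pt})=H^\ast(BGL_m)$; since the latter is a polynomial ring on generators in degrees $2,4,\ldots,2m$, this series is $\prod_{k=1}^{m}\frac{1}{1-v^{2k}}$, giving the first formula. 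The second formula is proved in exactly the same way, now decomposing $\alpha'=\alpha+mi$ with the factor $mi$ placed last, applying Proposition~\ref{BF_prop_1} to $A\ast L_{mi}=\mathrm{Ind}^{\alpha'}_{\alpha,mi}(A\boxtimes L_{mi})$, and using $\mathrm{Res}^{\alpha'}_{\alpha,mi}(B)={\mathcal{R}_{mi}}(B)\boxtimes L_{mi}$.

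The only genuinely non-formal inputs are the two facts used at the end: the factorization of the pairing under $\boxtimes$, and the evaluation of the point self-pairing as the Poincar\'e series of $BGL_m$. I expect the latter---identifying $\{L_{mi},L_{mi}\}$ with $\prod_{k=1}^{m}(1-v^{2k})^{-1}$ via the equivariant cohomology of a point---to be the main step to pin down against Lusztig's normalization of $\{\,,\,\}$ in Section~12.2 of \cite{Lusztig_Introduction_to_quantum_groups}; everything else is a direct application of Proposition~\ref{BF_prop_1} and the defining relations of ${_{mi}\mathcal{R}}$ and ${\mathcal{R}_{mi}}$.
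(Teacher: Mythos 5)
Your proposal is correct and follows essentially the same route as the paper: adjunction via Proposition~\ref{BF_prop_1}, the identity $\mathrm{Res}^{\alpha'}_{mi,\alpha}(B)=L_{mi}\boxtimes{_{mi}\mathcal{R}}(B)$, multiplicativity of the pairing under $\boxtimes$ (the paper cites Lusztig, Section~8.1.10(f)), and the evaluation $\{L_{mi},L_{mi}\}=\prod_{k=1}^{m}(1-v^{2k})^{-1}$ (the paper cites Lusztig, Lemma~1.4.4, which is exactly your $H^\ast(BGL_m)$ computation). The only difference is that you spell out the two cited inputs explicitly.
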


\begin{proof}
By Proposition \ref{BF_prop_1}, we have
$$\{L_{mi}\ast A,B\}=\{L_{mi}\boxtimes A,\mathrm{Res}^{\alpha'}_{mi,\alpha}B\}
=\{L_{mi}\boxtimes A,L_{mi}\boxtimes {_{mi}\mathcal{R}}(B)\}.
$$
By Section 8.1.10(f) in \cite{Lusztig_Introduction_to_quantum_groups}, we have
$$
\{L_{mi}\boxtimes A,L_{mi}\boxtimes {_{mi}\mathcal{R}}(B)\}
=\{L_{mi},L_{mi}\}\{A,{_{mi}\mathcal{R}}(B)\}.
$$
Lemma 1.4.4 in \cite{Lusztig_Introduction_to_quantum_groups} implies that
$$\{L_{mi},L_{mi}\}
=\prod_{k=1}^{m}\frac{1}{1-v^{2k}}.
$$
Hence, we get the first desired result.
The second one is similar.

\end{proof}

\begin{proposition}[Corollary 9.5 in \cite{Lusztig_Quivers_perverse_sheaves_and_the_quantized_enveloping_algebras}]\label{BF_prop_2}
For any $i\neq j$, we have
$$\bigoplus_{m+n=1-(i,j)\atop\textrm{ $m$ is odd}}L_{mi}\ast L_{j}\ast L_{ni}\simeq\bigoplus_{m+n=1-(i,j)\atop\textrm{ $m$ is even}}L_{mi}\ast L_{j}\ast L_{ni}.$$
\end{proposition}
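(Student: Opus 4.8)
The plan is to reduce the asserted isomorphism to an equality of classes in the Grothendieck group and then invoke the characterization of semisimple complexes by the geometric pairing. Fix $\mathbf{V}$ of dimension vector $\nu=(1-(i,j))i+j$ and put
\[
X=\bigoplus_{\substack{m+n=1-(i,j)\\ m\text{ odd}}}L_{mi}\ast L_{j}\ast L_{ni},
\qquad
Y=\bigoplus_{\substack{m+n=1-(i,j)\\ m\text{ even}}}L_{mi}\ast L_{j}\ast L_{ni}.
\]
Each factor $L_{mi}$, $L_j$ is a (shifted) constant sheaf on a point, hence semisimple, and $\ast$ preserves semisimplicity, so $X$ and $Y$ both lie in $\cD^{b,ss}_{G_{\mathbf V}}(E_{\mathbf V})$. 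By Corollary \ref{BF_cor_2} it therefore suffices to prove $\{X,B\}=\{Y,B\}$ for every simple perverse sheaf $B$ in $\cD^{b,ss}_{G_{\mathbf V}}(E_{\mathbf V})$.

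First I would observe that the geometric pairing $\{-,B\}$ is additive over direct sums and, being assembled from equivariant cohomology, depends on its first argument only through its class in $K_\nu$. Consequently the required equalities $\{X,B\}=\{Y,B\}$ for all $B$ follow at once from a single identity in the Grothendieck group, namely $[X]=[Y]$ in $K_\nu$, which converts the problem into a purely $K$-theoretic statement.

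To prove $[X]=[Y]$ I would pass to the subalgebra $\mathbf{U}^-\subseteq\mathbf{K}$. Under the multiplication $[A][B]=[A\ast B]$ of Proposition \ref{prop_associative}, the class $[L_{mi}]$ is the divided power $f_i^{(m)}$ and $[L_j]=f_j$ (consistently with the relation $[L_{ti}\ast L_{(m-t)i}]=f_{m,t}(v)[L_{mi}]$ already used in the proof of Corollary \ref{FLX_cor}), so that $[L_{mi}\ast L_{j}\ast L_{ni}]=f_i^{(m)}f_jf_i^{(n)}$. The classical quantum Serre relation in $\mathbf{U}^-$,
\[
\sum_{m+n=1-(i,j)}(-1)^{m}f_i^{(m)}f_jf_i^{(n)}=0,
\]
separates into its even and odd parts to give exactly $[Y]-[X]=0$, completing the argument.

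The step I expect to be the main obstacle is the passage from the numerical identity $\{X,B\}=\{Y,B\}$ back to the isomorphism $X\simeq Y$; this is supplied by Corollary \ref{BF_cor_2}, whose force comes from the positivity of the pairing on simple perverse sheaves. A self-contained alternative that bypasses $\mathbf{U}^-$ is to compute the pairings directly: two applications of Corollary \ref{BF_cor_1}, peeling $L_{mi}$ off the left and $L_{ni}$ off the right, give
\[
\{L_{mi}\ast L_{j}\ast L_{ni},B\}
=\Big(\prod_{k=1}^{m}\frac{1}{1-v^{2k}}\Big)\Big(\prod_{k=1}^{n}\frac{1}{1-v^{2k}}\Big)\{L_j,\mathcal{R}_{ni}\,{_{mi}\mathcal{R}}(B)\},
\]
after which one must verify that the alternating sum over $m$ vanishes. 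Establishing this cancellation requires an explicit understanding of $\mathcal{R}_{ni}\,{_{mi}\mathcal{R}}(B)$ for simple $B$ on $E_{\mathbf V}$, and it is this geometric computation—precisely the content that the decategorification route above replaces by the classical Serre relation—that constitutes the genuinely hard part.
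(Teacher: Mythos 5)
The paper does not actually prove this proposition: it is imported verbatim as Corollary 9.5 of Lusztig's paper on quivers and perverse sheaves, so there is no internal argument to compare yours against. Judged on its own terms, your primary route has a circularity problem. The reductions are fine --- $X$ and $Y$ are semisimple, and by Corollary \ref{BF_cor_2} (or directly, since a semisimple complex is determined up to isomorphism by its class in $K_\nu$) it suffices to prove $[X]=[Y]$. But you then obtain $[X]=[Y]$ by ``passing to the subalgebra $\mathbf{U}^-\subseteq\mathbf{K}$'' and identifying $[L_{mi}]$ with the divided power $f_i^{(m)}$. The existence of a homomorphism $\mathbf{U}^-\to\mathbf{K}$ sending $f_i$ to $[L_i]$ is precisely the assertion that the classes $[L_i]$ satisfy the quantum Serre relations in $\mathbf{K}$, which is the $K$-theoretic shadow of the proposition you are proving; in Lusztig's development, and in the work of Xiao--Xu--Zhao that the paper cites for the embedding, that embedding is constructed only \emph{after} this proposition is established. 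So the step ``$[L_{mi}\ast L_j\ast L_{ni}]=f_i^{(m)}f_jf_i^{(n)}$, now apply the Serre relation in $\mathbf{U}^-$'' assumes its conclusion. The argument would become non-circular only if you gave an independent proof that $f_i\mapsto[L_i]$ is well defined (for instance via the sheaf--function correspondence and Ringel's Hall-algebra computation of the relations over every $\mathbb{F}_q$), which you do not supply.

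Your ``self-contained alternative'' is the correct non-circular strategy --- it is essentially how Lusztig proves the relation in Chapter~13 of \emph{Introduction to Quantum Groups}: move the Serre element across the pairing using Proposition \ref{BF_prop_1} and Corollary \ref{BF_cor_1}, show the resulting expression vanishes, and conclude by Corollary \ref{BF_cor_2}. But you explicitly stop short of the cancellation, and you misidentify what it requires: one does not need a geometric computation of $\mathcal{R}_{ni}\,{_{mi}\mathcal{R}}(B)$ for each simple $B$. The missing input is algebraic --- the quantum Serre element lies in the radical of the standard bilinear form on the free algebra (Lusztig, Lemma 1.4.3, a Gaussian-binomial identity) --- combined with the facts that the geometric pairing realizes that form on the composition subalgebra and is nondegenerate there by the positivity proposition. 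As written, neither route is a complete proof: the first is circular and the second is, by your own account, unfinished.
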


\begin{theorem}\label{MT_2}
For any $i\neq j\in I$, we have
$$\bigoplus_{m+n=1-(i,j)\atop\textrm{ $m$ is odd}}{_{mi}\mathcal{R}}\cdot{_{j}\mathcal{R}}\cdot{_{ni}\mathcal{R}}\simeq\bigoplus_{m+n=1-(i,j)\atop\textrm{ $m$ is even}}{_{mi}\mathcal{R}}\cdot{_{j}\mathcal{R}}\cdot{_{ni}\mathcal{R}}$$ and
$$\bigoplus_{m+n=1-(i,j)\atop\textrm{ $m$ is odd}}{\mathcal{R}_{mi}}\cdot{\mathcal{R}_{j}}\cdot{\mathcal{R}_{ni}}\simeq\bigoplus_{m+n=1-(i,j)\atop\textrm{ $m$ is even}}{\mathcal{R}_{mi}}\cdot{\mathcal{R}_{j}}\cdot{\mathcal{R}_{ni}}.$$
\end{theorem}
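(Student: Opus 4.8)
The plan is to deduce the quantum Serre relation for derivation functors from the corresponding relation for Lusztig's induction operators (Proposition \ref{BF_prop_2}) by means of the adjunction-type identity of Corollary \ref{BF_cor_1}, together with the nondegeneracy of the geometric pairing (Corollary \ref{BF_cor_2}). The key observation is that Corollary \ref{BF_cor_1} expresses ${_{mi}\mathcal{R}}$ as a kind of adjoint to the operator $L_{mi}\ast(-)$ with respect to the bilinear form $\{-,-\}$, up to the scalar $\prod_{k=1}^{m}(1-v^{2k})^{-1}$. Iterating this across the three-fold composite ${_{mi}\mathcal{R}}\cdot{_{j}\mathcal{R}}\cdot{_{ni}\mathcal{R}}$ should turn it into the adjoint of the three-fold convolution $L_{ni}\ast L_{j}\ast L_{mi}$, and then Proposition \ref{BF_prop_2} can be invoked on that convolution side.

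**First I would** fix an arbitrary simple perverse sheaf $C\in\cD_{G_{\bfV}}^{b,ss}(E_{\bfV})$ of the appropriate total dimension vector and an arbitrary test object $A$, and compute $\{A,\,{_{mi}\mathcal{R}}\cdot{_{j}\mathcal{R}}\cdot{_{ni}\mathcal{R}}(C)\}$. Applying Corollary \ref{BF_cor_1} three times in succession, I would move the derivation functors across the pairing one at a time; each application peels off one factor, converting ${_{ni}\mathcal{R}}$ into $L_{ni}\ast(-)$, then ${_{j}\mathcal{R}}$ into $L_{j}\ast(-)$, and finally ${_{mi}\mathcal{R}}$ into $L_{mi}\ast(-)$, producing the identity
\begin{equation*}
\Bigl(\prod_{k=1}^{m}\frac{1}{1-v^{2k}}\Bigr)\Bigl(\prod_{k=1}^{n}\frac{1}{1-v^{2k}}\Bigr)\{L_{mi}\ast L_{j}\ast L_{ni}\ast A,\,C\}=\{A,\,{_{mi}\mathcal{R}}\cdot{_{j}\mathcal{R}}\cdot{_{ni}\mathcal{R}}(C)\}.
\end{equation*}
Here one must track the order of the convolution factors carefully, since $\ast$ is noncommutative; the left-to-right reading of the composite of restriction-type functors corresponds to building up the convolution in the opposite order.

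**Next**, I would sum over $m+n=1-(i,j)$ with $m$ odd and separately with $m$ even. Crucially, the scalar prefactor depends only on the unordered pair $\{m,n\}$ through $\prod_{k=1}^{m}(1-v^{2k})^{-1}\prod_{k=1}^{n}(1-v^{2k})^{-1}$, and since $m+n$ is fixed the term-by-term matching between the odd-$m$ and even-$m$ sides requires a pairing of summands with equal scalars. The cleanest route is to note that Proposition \ref{BF_prop_2} is an isomorphism of the \emph{full} convolutions, so after pairing against $A$ the two sums $\sum_{m\,\mathrm{odd}}\{L_{mi}\ast L_{j}\ast L_{ni}\ast A,C\}$ and $\sum_{m\,\mathrm{even}}\{L_{mi}\ast L_{j}\ast L_{ni}\ast A,C\}$ agree; one then needs the scalars to factor out uniformly. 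Since the Serre relation pairs $m\leftrightarrow m$ within a fixed total degree, the scalar $\prod_{k=1}^{m}\prod_{k=1}^{n}$ is genuinely the same on both sides only after the functoriality in $A$ is used—so I would instead apply the pairing identity with $A=\mathbf{1}_{pt}$ and vary $C$, reducing the statement purely to the convolution identity.

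**The hard part will be** handling these degree-dependent scalar factors correctly: because $\prod_{k=1}^{m}(1-v^{2k})^{-1}$ is not constant across the sum, one cannot simply pull a single scalar out and cancel it against the full Serre relation. The resolution is that the desired identity is an isomorphism of \emph{functors} on the derived category, not a scalar identity, so the scalars never actually appear: I would argue directly at the level of functors by applying the three derivation functors to $A\ast B\ast(\text{arbitrary})$ and invoking Corollary \ref{BF_cor_2} to upgrade the pairing-level equality into an honest isomorphism. Concretely, using Corollary \ref{BF_cor_2}, it suffices to prove $\{(\text{LHS functor})(C),B\}=\{(\text{RHS functor})(C),B\}$ for all simple $B$, and this follows from Proposition \ref{BF_prop_1} and Proposition \ref{BF_prop_2} once the adjunction is set up; the scalar factors cancel because they are identical on both the odd and even sides term-by-term after the bijection $m\mapsto m$ induced by fixing $m+n$. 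The second displayed isomorphism, involving $\mathcal{R}_{mi}$ and the opposite-order convolution, follows by the symmetric argument using the second identity of Corollary \ref{BF_cor_1}.
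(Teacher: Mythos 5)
Your overall strategy is exactly the one the paper uses: transpose the composite ${_{mi}\mathcal{R}}\cdot{_{j}\mathcal{R}}\cdot{_{ni}\mathcal{R}}$ across the pairing by three applications of Corollary \ref{BF_cor_1}, turning it into the convolution $L_{ni}\ast L_{j}\ast L_{mi}\ast(-)$, then invoke the geometric Serre relation of Proposition \ref{BF_prop_2} on the convolution side, and conclude by the nondegeneracy statement of Corollary \ref{BF_cor_2}. So in outline you have reconstructed the paper's argument, and you have even put your finger on the one delicate step.

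The gap is in how you dispose of the scalar factors, which you rightly call the hard part but then do not actually resolve. Each application of Corollary \ref{BF_cor_1} contributes the factor $\{L_{mi},L_{mi}\}^{-1}=\prod_{k=1}^{m}(1-v^{2k})$, so the three-fold transposition gives
\begin{equation*}
\{{_{mi}\mathcal{R}}\,{_{j}\mathcal{R}}\,{_{ni}\mathcal{R}}(A),B\}=c_{m,n}\,\{A,\,L_{ni}\ast L_{j}\ast L_{mi}\ast B\},\qquad c_{m,n}=(1-v^{2})\prod_{k=1}^{m}(1-v^{2k})\prod_{k=1}^{n}(1-v^{2k}).
\end{equation*}
Your assertion that these scalars are ``identical on both the odd and even sides term-by-term after the bijection $m\mapsto m$'' cannot be right: the odd-$m$ and even-$m$ index sets are disjoint, so there is no such bijection, and $c_{m,n}$ genuinely varies with $m$ for fixed $m+n$ (for $m+n=2$ one has $c_{1,1}=(1-v^{2})^{3}$ but $c_{2,0}=c_{0,2}=(1-v^{2})^{2}(1-v^{4})$). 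Proposition \ref{BF_prop_2} yields only the unweighted equality $\sum_{m\ \mathrm{odd}}\{A,L_{ni}\ast L_{j}\ast L_{mi}\ast B\}=\sum_{m\ \mathrm{even}}\{A,L_{ni}\ast L_{j}\ast L_{mi}\ast B\}$; a weighted version with non-constant weights does not follow from it, and specializing $A=\mathbf{1}_{pt}$ or ``arguing at the level of functors'' does not make the weights disappear, since they are built into the adjunction itself. You should know that the paper's own proof writes the transposed pairing with no scalar factors at all, so it silently passes over exactly this point; on the Grothendieck group ${_{mi}\mathcal{R}}$ acts as $v^{m(m-1)/2}({_{i}\mathcal{R}})^{m}=\bigl(\prod_{k=1}^{m}\frac{1-v^{2k}}{1-v^{2}}\bigr)({_{i}\mathcal{R}})^{(m)}$, so the claimed isomorphism amounts to a Serre-type relation for divided powers with the non-constant coefficients above, which is not what transposing Proposition \ref{BF_prop_2} produces. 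Until the dependence of $c_{m,n}$ on $m$ is dealt with, neither your argument nor the one in the paper closes this step.
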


\begin{proof}
Let $\alpha,\beta\in\mathbb{N}I$ such that $\alpha=\beta+j+(1-(i,j))i$, and ${\mathbf{V}_{\alpha}}, {\mathbf{V}_{\beta}}$ be fixed $I$-graded vector spaces with dimension vectors $\alpha, \beta$, respectively.
By Corollary \ref{BF_cor_1}, we have
$$\{\bigoplus_{m+n=1-(i,j)\atop\textrm{ $m$ is odd}}{_{mi}\mathcal{R}}\cdot{_{j}\mathcal{R}}\cdot{_{ni}\mathcal{R}}(A),B\}
=\{A,\bigoplus_{m+n=1-(i,j)\atop\textrm{ $m$ is odd}}L_{ni}\ast L_{j}\ast L_{mi}\ast B\},
$$
and
$$\{\bigoplus_{m+n=1-(i,j)\atop\textrm{ $m$ is even}}{_{mi}\mathcal{R}}\cdot{_{j}\mathcal{R}}\cdot{_{ni}\mathcal{R}}(A),B\}
=\{A,\bigoplus_{m+n=1-(i,j)\atop\textrm{ $m$ is even}}L_{ni}\ast L_{j}\ast L_{mi}\ast B\},
$$
for any $A\in\cD_{G_{\mathbf{V}_{\alpha}}}^{b,ss}(E_{\mathbf{V}_{\alpha}})$ and $B\in\cD_{G_{\mathbf{V}_{\beta}}}^{b,ss}(E_{\mathbf{V}_{\beta}})$.
By Proposition \ref{BF_prop_2},
$$\{A,\bigoplus_{m+n=1-(i,j)\atop\textrm{ $m$ is odd}}L_{ni}\ast L_{j}\ast L_{mi}\ast B\}
=\{A,\bigoplus_{m+n=1-(i,j)\atop\textrm{ $m$ is even}}L_{ni}\ast L_{j}\ast L_{mi}\ast B\}.
$$
By Corollary \ref{BF_cor_2}, we get the first desired result. The second one is similar.

\end{proof}

\subsection{}

For any $A\in\cD_{G_{\bfV_\alpha}}^{b,ss}(E_{\bfV_\alpha})$, define two maps $m^{L}_{[A]}$ and $m^{R}_{[A]}$ on the algebra $\mathbf{K}$ by $m^{L}_{[A]}([B])=[A\ast B]$ and $m^{R}_{[A]}([B])=[B\ast A]$, respectively.

For any $i\in I$, define two maps $\epsilon_{i}$ and ${_{i}\epsilon}$ on the algebra $\mathbf{K}$ by
$\epsilon_{i}([B])=\mathcal{R}_{i}([B])$ and ${_{i}\epsilon}([B])={_{i}\mathcal{R}}([B])$, respectively.

\begin{proposition}
\begin{enumerate}
  \item[(1)]For any $A\in\cD_{G_{\bfV_\alpha}}^{b,ss}(E_{\bfV_\alpha})$ and $B\in\cD_{G_{\bfV_\beta}}^{b,ss}(E_{\bfV_\beta})$, we have $$m^{L}_{[A]}\circ m^{L}_{[B]}=m^{L}_{[A\ast B]}\,\,\,\,\,\textrm{and}\,\,\,\,\,m^{R}_{[B]}\circ m^{R}_{[A]}=m^{R}_{[A\ast B]}.$$
  \item[(2)]For any $i\neq j\in I$, we have $$\sum_{m+n=1-(i,j)}(-1)^m\epsilon^{(m)}_{i}\circ\epsilon_{j}\circ\epsilon^{(n)}_{i}=0$$ and $$\sum_{m+n=1-(i,j)}(-1)^m{_{i}\epsilon}^{(m)}\circ{_{j}\epsilon}\circ{_{i}\epsilon}^{(n)}=0,$$
      where ${_{i}\epsilon}^{(n)}={_{i}\epsilon}^{n}/[n]_{v}!$ and ${\epsilon_{i}^{(n)}}={\epsilon_{i}^{n}}/[n]_{v}!$.
  \item[(3)]For any $A\in\cD_{G_{\bfV_\alpha}}^{b,ss}(E_{\bfV_\alpha})$ and $i\in I$, we have $${_{i}\epsilon}\circ m^{L}_{[A]}=v^{-(\alpha,i)}m^{L}_{[A]}\circ{_{i}\epsilon}+m^{L}_{[{_{i}\mathcal{R}}(A)]}.$$
  \item[(4)]For any $B\in\cD_{G_{\bfV_\beta}}^{b,ss}(E_{\bfV_\beta})$ and $i\in I$, we have $${\epsilon_{i}}\circ m^{R}_{[B]}=v^{-(\beta,i)}m^{R}_{[B]}\circ{\epsilon_{i}}+m^{R}_{[{\mathcal{R}_{i}}(B)]}.$$
\end{enumerate}
\end{proposition}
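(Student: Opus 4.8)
The plan is to dispatch the four parts in increasing order of difficulty, deducing (1), (3) and (4) directly from the structural results already in hand and reserving the bilinear-form machinery for the quantum Serre relations in (2). Part (1) is just a restatement of associativity: from the definitions $m^{L}_{[A]}\circ m^{L}_{[B]}([C])=[A\ast(B\ast C)]$ and $m^{R}_{[B]}\circ m^{R}_{[A]}([C])=[(C\ast A)\ast B]=[C\ast(A\ast B)]$, so both identities follow at once from the associativity of $\ast$ on $\mathbf{K}$ (Proposition \ref{prop_associative}).

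Parts (3) and (4) are the $m=1$ instances of the derivation formulas. For (3) I would apply Corollary \ref{FLX_cor} with $m=1$ to $A\ast B$: generically $a=0$, $b=1$, the sum has only the terms $t=0,1$, and $P_{0}=(\alpha,i)$, $P_{1}=0$. Reading the shifts as powers of $v$ via $[X[-k](-\tfrac{k}{2})]=v^{-k}[X]$ gives $[{_{i}\mathcal{R}}(A\ast B)]=v^{-(\alpha,i)}[A\ast{_{i}\mathcal{R}}(B)]+[{_{i}\mathcal{R}}(A)\ast B]$, which is exactly the asserted identity once the two summands are rewritten as $v^{-(\alpha,i)}m^{L}_{[A]}\circ{_{i}\epsilon}$ and $m^{L}_{[{_{i}\mathcal{R}}(A)]}$. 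Part (4) is identical using Corollary \ref{FLX_cor_1}, where now $P'_{0}=0$ and $P'_{1}=(i,\beta)$ put the factor $v^{-(\beta,i)}$ on the correct summand. The only care needed is the boundary behaviour when $\alpha_{i}=0$ or $\beta_{i}=0$, where the range $\{a,\dots,b\}$ collapses to one value and one of ${_{i}\mathcal{R}}$, $\mathcal{R}_{i}$ vanishes; one checks the formula persists under the convention that restriction along a missing $i$ is zero.

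Part (2) is the substantial one, and here I would mirror the proof of Theorem \ref{MT_2} but with divided powers. By nondegeneracy of the geometric pairing (Corollary \ref{BF_cor_2}), it suffices to show that pairing the alternating sum against an arbitrary simple perverse sheaf $B$ gives $0$. Using Corollary \ref{BF_cor_1} (equivalently Proposition \ref{BF_prop_1}) I would move each factor ${_{i}\mathcal{R}}$ and ${_{j}\mathcal{R}}$ off the first argument and onto the second as a convolution $L_i\ast$, respectively $L_j\ast$; each move contributes a scalar, and the claim is that the total scalar of the three moves, combined with the divided-power normalisations $1/[m]_v!$ and $1/[n]_v!$, collapses to a factor independent of the splitting $m+n=1-(i,j)$. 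Concretely, the identification $[L_i^{\ast m}]=[m]_v!\,[L_{mi}]$ in $\mathbf{K}$ turns $\tfrac{1}{[m]_v![n]_v!}\,L_i^{\ast n}\ast L_j\ast L_i^{\ast m}$ into $L_{ni}\ast L_j\ast L_{mi}$, so that
$$\{{_{i}\epsilon}^{(m)}\circ{_{j}\epsilon}\circ{_{i}\epsilon}^{(n)}(A),B\}=(1-v^2)^{2-(i,j)}\,\{A,L_{ni}\ast L_j\ast L_{mi}\ast B\},$$
with a prefactor no longer depending on $(m,n)$. The alternating sum over $m+n=1-(i,j)$ of the right-hand side then vanishes by the quantum Serre relation among the $L$'s (Proposition \ref{BF_prop_2}), and nondegeneracy upgrades this to the operator identity; the $\mathcal{R}_{mi}$ version is identical using the second formula in Corollary \ref{BF_cor_1}.

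I expect the scalar bookkeeping in the last step to be the main obstacle: one must verify that the product of the three adjunction scalars from Corollary \ref{BF_cor_1} with the two divided-power factors is genuinely constant along each diagonal $m+n=1-(i,j)$, since only then does Proposition \ref{BF_prop_2}'s uniform-weight relation transfer. This is precisely the feature that forces the use of the divided powers ${_{i}\epsilon}^{(m)}={_{i}\epsilon}^{m}/[m]_v!$ rather than the raw iterated functors, for which the transferred weights would fail to be constant.
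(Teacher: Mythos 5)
Your treatment of (1), (3) and (4) coincides with the paper's, which simply cites Proposition \ref{prop_associative} for (1) and Corollaries \ref{FLX_cor} and \ref{FLX_cor_1} for (3) and (4); you merely spell out the $m=1$ specialization and the boundary cases. The only real divergence is in (2): the paper deduces the quantum Serre relations for ${_{i}\epsilon}$ and $\epsilon_{i}$ directly from Theorem \ref{MT_2}, which is the same relation stated for the functors ${_{mi}\mathcal{R}}$, whereas you bypass Theorem \ref{MT_2} and rerun its pairing argument at the level of the divided-power operators, using $[L_i^{\ast m}]=[m]_v!\,[L_{mi}]$ together with Corollary \ref{BF_cor_1} and the nondegeneracy of $\{-,-\}$ (Corollary \ref{BF_cor_2}). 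Your route has the advantage of making explicit a step the paper leaves implicit, namely the identification of ${_{i}\epsilon}^{m}/[m]_v!$ with the operator induced by ${_{mi}\mathcal{R}}$ --- without some such identification, Theorem \ref{MT_2}, which concerns ${_{mi}\mathcal{R}}$ rather than iterates of ${_{i}\mathcal{R}}$, does not immediately yield the stated identity for the divided powers. Your observation that the total adjunction scalar $(1-v^2)^{2-(i,j)}$ is constant along the diagonal $m+n=1-(i,j)$ is precisely what legitimizes the transfer through Proposition \ref{BF_prop_2}, and you track these scalars more carefully than the paper's own proof of Theorem \ref{MT_2} does (both arguments also tacitly use the symmetry of Lusztig's pairing to move restriction out of the first slot). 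Since both proofs ultimately rest on the same two pillars --- Proposition \ref{BF_prop_2} and the nondegenerate geometric pairing --- the difference is one of packaging rather than substance, and your version is the more self-contained of the two.
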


\begin{proof}
The first item is implied by Proposition \ref{prop_associative}. The second one is implied by Theorem \ref{MT_2}. The third and last one are implied by Corollarys \ref{FLX_cor} and \ref{FLX_cor_1}, respectively.

\end{proof}

\bibliography{mybibfile}

\end{document}